\newcommand{\parenitem}{\stepcounter{enumi}\item[(\theenumi)]}
\newcommand{\R}{\mathbb{R}}
\newcommand{\Q}{\mathbb{Q}}
\newcommand{\Z}{\mathbb{Z}}
\newcommand{\A}{\mathbb{A}}
\DeclareMathOperator{\Homeo}{Homeo}
\DeclareMathOperator{\dist}{dist}
\DeclareMathOperator{\Out}{Out}
\newtheorem{theorem}{Theorem}[section]
\newtheorem{lemma}[theorem]{Lemma}
\newtheorem{proposition}[theorem]{Proposition}
\newtheorem{corollary}[theorem]{Corollary}
\newtheorem{question}[theorem]{Question}
\theoremstyle{definition}
\newtheorem{remark}[theorem]{Remark}
\theoremstyle{definition}
\newtheorem{example}[theorem]{Example}
\theoremstyle{definition}
\newtheorem{definition}[theorem]{Definition}
\begin{document}

\title{When actions of amenable groups can be lifted \\ to the universal cover}
\author{Kiran Parkhe \thanks{The author acknowledges support from the Lady Davis Foundation.}}
\date{\today}
\maketitle

\begin{abstract}
In the first part of this paper, we let $G$ be a finitely-generated amenable group such that $G/[G, G]$ is torsion-free. We suppose that $G$ acts by homeomorphisms homotopic to the identity on a manifold $M$, and give conditions on $M$ which imply that such an action must lift to an action on the universal cover $\tilde{M}$. The circle, all 2-manifolds except the open annulus, and most compact 3-manifolds satisfy these conditions. The proof uses a dynamical tool called homological rotation vectors, and Thurston's Geometrization Theorem in the latter case.

On manifolds not satisfying our conditions, such actions really may fail to lift. In the second part, we try to understand the dynamical possibilities in the simplest case: $G = \Z^2$, and $M = \A$ is the open annulus. We show that if a $\Z^2$ action homotopic to the identity on $\A$ fails to lift to a $\Z^2$ action on the plane, and if the action satisfies one additional condition (which may not be necessary), the action is essentially similar to the one generated by $\bar{f_0}(\theta, y) = (\theta + y, y)$ and $\bar{g_0}(\theta, y) = (\theta, y + 1)$.
\end{abstract}

\section{Introduction}
Let $M$ be a manifold, which in this paper we always take to mean a connected orientable topological manifold (Hausdorff, second-countable, locally Euclidean topological space), possibly with boundary. Let $\pi\colon \tilde{M} \to M$ be the universal cover. If $f\colon M \to M$ is a homeomorphism, a \emph{lift} $\tilde{f}\colon \tilde{M} \to \tilde{M}$ is a homeomorphism such that $\pi \circ \tilde{f} = f \circ \pi$.

There always exist lifts of $f$ to the universal cover. In fact, let $x \in M$ be arbitrary, and let $\tilde{x} \in \pi^{-1}(x)$. If $\tilde{f}$ is to be a lift of $f$, we must have $\tilde{f}(\tilde{x}) \in \pi^{-1}(f(x))$. Conversely, for any choice of $\widetilde{f(x)} \in \pi^{-1}(f(x))$, there is a unique lift $\tilde{f}$ such that $\tilde{f}(\tilde{x}) = \widetilde{f(x)}$.



Let $G$ be a discrete group. An \emph{action} of $G$ on $M$ is a homomorphism $\phi\colon G \to \Homeo(M)$, the group of homeomorphisms of $M$. A \emph{lift} of $\phi$ to the universal cover is an action $\tilde{\phi}\colon G \to \Homeo(\tilde{M})$ such that for every $g \in G$, $\tilde{\phi}(g)$ is a lift of $\phi(g)$.

\newpage

Lifting a group action on $M$ to the universal cover is not always possible. Here we give some illustrative examples where, for simplicity, $M = S^1$.

\begin{example}
Regard the circle as $\R/\Z$, and consider the $\Z^2$ action generated by $$f(x + \Z) = -x + \Z, \hspace{6pt} g(x + \Z) = x + 1/2 + \Z.$$ Note that $f$ and $g$ commute, but no choice of lifts to the line commute.
\end{example}

The trouble in this example comes from the fact that $f$ is orientation-reversing. In what follows, we will consider only group actions on manifolds by homeomorphisms that are \emph{homotopic to the identity}, which on the circle means orientation-preserving.

If $f\colon M \to M$ is homotopic to the identity, there are special lifts of $f$ to the universal cover called \emph{homotopy lifts}, defined as follows. Let $f_t$ be a homotopy with $f_0 = id$ and $f_1 = f$. There is a unique homotopy $\tilde{f}_t$ on $\tilde{M}$ such that $\tilde{f}_0 = id_{\tilde{M}}$, and $\pi \circ \tilde{f}_t = f_t \circ \pi$ for every $t$. The homeomorphism $\tilde{f}_1$ is called a homotopy lift of $f$. In general, there may be multiple homotopy lifts, since there may be multiple non-homotopic homotopies from the identity to $f$. For example, every lift of an orientation-preserving homeomorphism of the circle to the line is a homotopy lift. However, it is easy to see that homotopy lifts commute with Deck transformations, so any two homotopy lifts differ by an element in the center of the group of Deck transformations.

\begin{example}
Let $G = \Z/2\Z$. $G$ acts on the circle by a half-turn $R_{1/2}$, and this action does not lift to an action on the line, since any lift of $R_{1/2}$ has infinite order.
\end{example}

\begin{example}
Let $G = BS(1, 3) = \langle a, b \colon aba^{-1} = b^3\rangle$ be the Baumslag-Solitar group. (Note that $[a, b] = b^2$.) Let $C_1 = \R \cup \{\infty\}$ be the circle. Let $f'(x) = 3x, g'(x) = x + 1$, and $f'(\infty) = g'(\infty) = \infty$; these are homeomorphisms of $C_1$. 

Let $C_2$ be a double cover of $C_1$. Let $f$ and $g$ be lifts of $f'$ and $g'$ to $C_2$ such that $g$ has rotation number 1/2, i.e., interchanges the two lifts $\infty_1$ and $\infty_2$ of $\infty$. Define $\phi(a) = f, \phi(b) = g$. This action of $BS(1, 3)$ on the circle does not lift to the line, since if $\tilde{f}$ and $\tilde{g}$ are any lifts of $f$ and $g$, $[\tilde{f}, \tilde{g}]$ fixes all lifts of $\infty_1$ and $\infty_2$ to the line. On the other hand, $\tilde{g}$ cannot fix these points since $g$ interchanges $\infty_1$ and $\infty_2$. Thus $[\tilde{f}, \tilde{g}] \neq \tilde{g}^2$.
\end{example}

The last two examples both involve torsion -- the first in an obvious way, the second less obviously. Namely, in $G = BS(1, 3)$, the element $b$ descends to a torsion element in $G/[G, G]$, since $b^2 = aba^{-1}b^{-1} \in [G, G]$. It turns out that requiring the group to have torsion-free abelianization is exactly what is needed.

\begin{example}
Let $\Sigma_g$ ($g \geq 2$) be a closed genus-$g$ surface, and let $G = \Gamma_g = \pi_1(\Sigma_g)$. Recall that $\Gamma_g$ has the presentation $$\Gamma_g = \langle a_1, b_1, \ldots, a_g, b_g \colon [a_1, b_1]\cdots[a_g, b_g] = 1\rangle.$$

Observe that $\Gamma_g$ acts by isometries of the Poincar\'{e} hyperbolic disk $\mathbb{D}$; indeed, if we adopt a hyperbolic metric on $\Sigma_g$, then $\widetilde{\Sigma_g}$ with the pullback metric is isometric to $\mathbb{D}$, and $\Gamma_g$ acts on $\widetilde{\Sigma_g}$ by covering translations. This $\Gamma_g$ action extends continuously to an action $\phi$ on the circle at infinity $\partial^\infty \mathbb{D}$. This action does not lift to the line. Indeed, if $\widetilde{\phi(a_i)}, \widetilde{\phi(b_i)}$ are any lifts of the generators to the line, the reader can check that $[\widetilde{\phi(a_1)}, \widetilde{\phi(b_1)}]\cdots [\widetilde{\phi(a_g)}, \widetilde{\phi(b_g)}]$ is a translation by $2g - 2$. See \cite{Ghys} for more information.
\end{example}

In this example, the non-amenability of $G$ causes trouble. When an amenable group acts on a compact manifold, there is an invariant probability measure, and this allows us to bring certain tools to bear.

We show in Section 2 of the paper that, in many cases, avoiding the problems seen in the above examples is sufficient to guarantee that a group action lifts to the universal cover. More precisely:

\begin{restatable}{theorem}{WhichMLifts}
\label{WhichM}
Let $G$ be a finitely-generated amenable group with torsion-free abelianization. If $M$ is any of the following, then a $G$ action $\phi$ on $M$ by homeomorphisms homotopic to the identity must lift to an action $\tilde{\phi}$ on the universal cover $\tilde{M}$ such that $\tilde{\phi}(g)$ is a homotopy lift of $\phi(g)$ for every $g \in G$:

\begin{itemize}

\item Any 1-manifold (i.e., $S^1$ or, trivially, $\R$)

\item Any 2-manifold (compact or not, with or without boundary) except the open annulus

\item Any compact 3-manifold, except a closed 3-manifold of spherical, Nil, or $\widetilde{SL(2, \R)}$ geometry

\item Any $n$-manifold ($n \geq 3$) decomposing as a nontrivial connected sum
\end{itemize}
\end{restatable}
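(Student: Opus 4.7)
The plan is to reformulate the lifting problem as a splitting problem for a central extension, then build a splitting from homological rotation vectors coming from an invariant measure.

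First I would fix, for each $g \in G$, an arbitrary homotopy lift $T(g) \in \Homeo(\tilde M)$ of $\phi(g)$. Since homotopy lifts commute with deck transformations and any two homotopy lifts of the same homeomorphism differ by a central deck transformation, the defect $T(g)T(h)T(gh)^{-1}$ lies in $Z(\pi_1(M))$. Letting $\tilde G$ denote the group of all homotopy lifts of elements of $\phi(G)$, this yields a central extension
\[
1 \to Z(\pi_1(M)) \to \tilde G \to G \to 1,
\]
and lifting $\phi$ by homotopy lifts is equivalent to splitting this extension.

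Next I would use amenability to construct a homological rotation vector. When $M$ is compact, amenability of $G$ provides a $G$-invariant Borel probability measure $\mu$. For a homotopy lift $\tilde f$ of $\phi(g)$, the homotopy from $id_M$ to $\phi(g)$ assigns to each $x \in M$ a path from $x$ to $\phi(g)(x)$; averaging the homology classes of such paths over $\mu$ and extracting a Birkhoff limit yields a rotation vector $\rho(\tilde f) \in H_1(M;\R)$. The $G$-invariance of $\mu$ together with a change-of-variables argument makes $\rho : \tilde G \to H_1(M;\R)$ a group homomorphism whose restriction to $Z(\pi_1(M))$ is the natural map $Z(\pi_1(M)) \to H_1(M;\Z) \hookrightarrow H_1(M;\R)$. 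Whenever the image of $Z(\pi_1(M))$ in $H_1(M;\Z)$ is torsion-free, this restriction is injective, and I can split as follows: reducing $\rho$ modulo $\rho(Z(\pi_1(M)))$ gives a homomorphism $G \to H_1(M;\R)/\rho(Z(\pi_1(M)))$, a product of a torus and a vector space; since $G^{ab}$ is finitely generated and torsion-free, hence free abelian, this map lifts to $\tilde\rho : G \to H_1(M;\R)$; then $s(g) \in \tilde G$ defined as the unique homotopy lift of $\phi(g)$ with $\rho(s(g)) = \tilde\rho(g)$ is a splitting, since $s(gh)$ and $s(g)s(h)$ lie in the same fiber over $gh$ and carry the same rotation vector.

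It then remains to verify these hypotheses case by case. For $S^1$ and for $T^2$, $Z(\pi_1(M)) = H_1(M;\Z)$ is torsion-free. For $2$-manifolds other than the open annulus, $Z(\pi_1(M))$ is trivial in all other non-compact cases (so the extension itself is trivial and no invariant measure is needed), while for compact surfaces the image of the center in $H_1$ is torsion-free. Nontrivial connected sums yield a nontrivial free product for $\pi_1$, whose center vanishes. The substantive case is compact $3$-manifolds, where I would invoke Thurston's Geometrization to reduce to the eight model geometries: hyperbolic and Sol manifolds have $Z(\pi_1) = 0$; flat, $S^2 \times \R$, and $H^2 \times \R$ quotients all have $Z(\pi_1)$ injecting torsion-freely into $H_1$; and the three excluded geometries (spherical, Nil, $\widetilde{SL(2, \R)}$) are precisely those where $Z(\pi_1(M))$ is torsion or maps into the torsion subgroup of $H_1$, so that the rotation-vector approach genuinely breaks down. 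This case-by-case verification across the Geometrization classes will be the main technical obstacle, together with the secondary difficulty that non-compact manifolds generally admit no $G$-invariant probability measure — which is precisely why the open annulus, being non-compact with nontrivial center, emerges as the unique $2$-dimensional exception.
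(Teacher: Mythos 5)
Your underlying mechanism is the same one the paper uses: amenability plus compactness gives an invariant probability measure, the mean homological translation vector is a homomorphism on homotopy lifts, and torsion-freeness of the finitely generated abelianization lets you choose lifts coherently (the paper verifies the relations directly instead of phrasing it as splitting a central extension, but that is only packaging). Two technical points in your splitting step need repair, though. First, the correct hypothesis is not that the \emph{image} of $Z(\pi_1(M))$ in $H_1(M;\Z)$ is torsion-free, but that the map $Z(\pi_1(M)) \to H_1(M;\R)$ is injective, i.e.\ no nontrivial central element has a nonzero power in $[\pi_1(M),\pi_1(M)]$ (a central element lying in the commutator subgroup would have torsion-free image but kill injectivity). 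Second, the fiber of $\tilde G \to G$ over $g$ is a coset of the group of homotopy lifts of $id_M$, which is in general only a subgroup of $Z(\pi_1(M))$; you must reduce $\rho$ modulo the $\rho$-image of that kernel, not of all of $Z(\pi_1(M))$, or else the ``unique homotopy lift of $\phi(g)$ with $\rho(s(g))=\tilde\rho(g)$'' need not exist.

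The genuine gaps are in the case analysis. (i) Geometrization does \emph{not} reduce closed $3$-manifolds to the eight geometries: an irreducible manifold with nontrivial JSJ decomposition (a non-Seifert graph manifold, or a mixed manifold) carries no geometry, and your argument says nothing about its center. The paper fills exactly this hole by quoting the centralizer theorem (Theorem 3.1 of Aschenbrenner--Friedl--Wilton): a nontrivial center forces a closed orientable irreducible $3$-manifold to be Seifert fibered, and only then does a geometry-by-geometry check become legitimate. Without this step your proof covers connected sums and geometric manifolds but not the remaining irreducible ones. (ii) Boundary and non-compactness: your claim that every non-compact $2$-manifold other than the open annulus has trivial center is false --- $S^1\times[0,\infty)$ and $S^1\times[0,1)$ have center $\Z$ and admit no invariant probability measure, so neither branch of your argument applies; similarly, compact $3$-manifolds with boundary are included in the statement but absent from your geometric classification. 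The paper handles all such cases by a separate device you would need: if some boundary component is a circle or closed surface satisfying the center condition, lift the restricted action on that compact boundary component and note that a product of homotopy lifts which is the identity on a lift of that component is the identity deck transformation. (Your direct verification for flat and $\mathbb{H}^2\times\R$ manifolds --- center generated by invariant translations, resp.\ the fiber with Euler number zero, injecting into $H_1$ --- is plausible but asserted without proof; the paper instead passes to a finite cover from AFW's tables and uses torsion-freeness of the fundamental group of an aspherical manifold, so that part is a legitimate alternative route rather than a gap.)
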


The idea is as follows. Given a generating set $S = \{g_1, \ldots, g_k\}$ for $G$, we would like to show that if $\tilde{\phi}(g_i)$ are homotopy lifts of $\phi(g_i)$, then whenever $g_{i_1}\cdots g_{i_n}$ is a word equal to the identity, $\tilde{\phi}(g_{i_1})\cdots \tilde{\phi}(g_{i_n}) = id_{\tilde{M}}$. At worst, $\tilde{\phi}(g_{i_1})\cdots \tilde{\phi}(g_{i_n})$ will be a nontrivial covering translation, since it is a lift of $\phi(g_{i_1})\cdots \phi(g_{i_n}) = id_M$.

When $M = S^1$, for example, the worry is that $\tilde{\phi}(g_{i_1})\cdots \tilde{\phi}(g_{i_n})$ may be a nontrivial integer translation. This is a hint that we should look at the \emph{translation number} of lifts of elements of our group action. We will show that we can ensure $\tilde{\phi}(g_{i_1})\cdots \tilde{\phi}(g_{i_n})$ has translation number 0, which implies it is the identity as desired.

In general, when $M$ is any compact smooth manifold and $G$ as in the theorem acts by homeomorphisms homotopic to the identity, we will show that $\tilde{\phi}(g_{i_1})\cdots \tilde{\phi}(g_{i_n})$ has \emph{mean homological translation vector} equal to 0. For many classes of manifolds $M$, including those mentioned in the theorem, a covering translation with zero mean homological translation vector is the identity, again giving us the desired result. See the next section for more information.

\vspace{12pt}

On the other hand, if $M$ is not one of the manifolds listed above, the situation can be quite different. Even in the simplest case, where $G = \Z^2$ -- that is, we have two commuting homeomorphisms, homotopic to the identity -- the action can fail to lift. For instance:

\begin{example}
\label{AnnulusExamples}
Let $\A = \R/\Z \times \R$ be the open annulus. Denote points in $\A$ by $(\theta, y)$, and in the universal cover $\R^2$ by $(x, y)$. Let $\bar{f_0}(\theta, y) = (\theta + y, y)$, and $\bar{g_0}(\theta, y) = (\theta, y + 1)$. Note that $\bar{f_0}$ and $\bar{g_0}$ commute. Let $f_0(x, y) = (x + y, y)$, and $g_0(x, y) = (x, y + 1)$. These do \emph{not} commute. Their commutator is $[f_0, g_0](x, y) = (x + 1, y)$, and this would be true regardless of which lifts we chose.

This example can be generalized in the following straightforward way. Let $T^2 = \R^2/\Z^2$ be the torus, and let $\bar{\bar{f}}\colon T^2 \to T^2$ be a homeomorphism isotopic to the linear torus map $\left( \begin{array}{cc}
1 & 1\\
0 & 1 \end{array} \right).$ This can be lifted to $\bar{f}\colon \A \to \A$, which is unique up to integral translations in the $y$-direction. Call such a $\Z^2$ action, generated by $\bar{f}$ and $\bar{g_0}$, \emph{lifted toral}. Note that $\bar{f}$ commutes with $\bar{g_0}$, but their lifts $f, g_0\colon \R^2 \to \R^2$ fail to commute, for the same reason as above.
\end{example}

\begin{question}
\label{LiftedToralQuestion}
Let $\bar{f}, \bar{g} \colon \A \to \A$ be commuting homeomorphisms, homotopic to the identity, of the open annulus. Suppose that lifts $f$ and $g$ to the plane fail to commute. Does it follow that some element $\bar{e}$ of $\langle \bar{f}, \bar{g}\rangle$ is conjugate to $\bar{g_0}$ above?
\end{question}

If there is such an element $\bar{e}$, then up to conjugacy and pre-composing by an automorphism of $\Z^2$, the action is lifted toral. It would be very interesting if this dynamical characterization follows from the assumption that $\bar{f}$ and $\bar{g}$ fail to lift. See \cite{Parkhe} for some topological and dynamical aspects of this problem. In Section 3 of this paper, we show that this conclusion does follow, if we additionally assume that the action has an element satisfying a ``non-intersection condition'':

\begin{restatable}{theorem}{LiftedToralTheorem}
\label{LiftedToralTheorem}

Let $\bar{f}, \bar{g} \colon \A \to \A$ be commuting homeomorphisms, homotopic to the identity, of the open annulus. Suppose that lifts $f$ and $g$ to the plane fail to commute. Suppose there is an element $\bar{e'}$ of $\langle \bar{f}, \bar{g}\rangle$ that is isotopic to the identity, and an essential circle $c \subset \A$, such that $\bar{e'}(c) \cap c = \emptyset$. Then a possibly different element $\bar{e} \in \langle \bar{f}, \bar{g}\rangle$ is conjugate to $\bar{g_0}$.
\end{restatable}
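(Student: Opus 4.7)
The plan is to show that, after possibly replacing $\bar e'$ by its inverse, the element $\bar e = \bar e'$ itself is conjugate to $\bar g_0$. Since $c$ and $\bar e'(c)$ are disjoint essential circles, $\bar e'(c)$ lies in one of the two components of $\A \setminus c$; replacing $\bar e'$ by $\bar e'^{-1}$ if necessary, I may assume $\bar e'(c)$ lies in the upper component. Because $\bar e'$ is isotopic to the identity it preserves orientation and fixes the ends, so it carries ``above'' to ``above'' among disjoint essential circles, and the family $\{\bar e'^n(c)\}_{n\in\Z}$ is monotonically arranged. The central claim is that these circles escape to the upper end as $n\to +\infty$ and to the lower end as $n\to -\infty$. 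Granting the claim, the closed annular region $B$ between $c$ and $\bar e'(c)$ is a fundamental domain for a free, properly discontinuous, cocompact $\Z$-action of $\langle\bar e'\rangle$ on $\A$: every orbit escapes to both ends, so there are no fixed points, and only finitely many translates of $B$ meet a compact set. The quotient is a closed orientable surface of Euler characteristic zero, hence a torus, and any $\Z$-covering of $T^2$ by the annulus is equivalent as a covering to the standard $\A\to T^2$ with deck generator $\bar g_0$; intertwining the two deck actions produces a conjugacy in $\Homeo(\A)$ between $\bar e'$ and $\bar g_0$.

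I would prove the central claim by contradiction. Suppose $\{\bar e'^n(c)\}_{n\geq 0}$ lies below an essential circle $\gamma$ (the lower end is symmetric). Let $A_n^+$ be the component of $\A\setminus \bar e'^n(c)$ containing the upper end; the nested intersection $A_\infty^+ = \bigcap_n A_n^+$ is a $\bar e'$-invariant open set containing the upper component of $\A\setminus \gamma$, so its frontier $L_+ = \partial A_\infty^+$ is a nonempty compact $\bar e'$-invariant subset of $\A$ separating the two ends. The key step is to promote $\bar e'$-invariance to $\Z^2$-invariance: because $\bar f$ and $\bar g$ commute with $\bar e'$ and preserve the ends of $\A$, the sets $\bar f(L_+)$ and $\bar g(L_+)$ share the ``outermost $\bar e'$-invariant essential separator''-property of $L_+$, and a uniqueness argument should force each to coincide with $L_+$. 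From this $\Z^2$-invariant essential compactum I would extract an invariant essential circle $C\subset L_+$, either directly as a boundary component or by passing to a minimal invariant sub-continuum, appealing to the structure theory of invariant continua in annuli (e.g.\ prime ends).

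The induced $\Z^2$-action on $C\cong S^1$ inherits lifts to $\tilde C\cong \R$ from the lifts $f, g$ of $\bar f, \bar g$: because $C$ is essential, $\pi^{-1}(C)$ is a single line on which the deck generator $T$ acts by unit translation, and the restriction of the commutator $[f,g] = T^k$ to $\tilde C$ is therefore a nonzero translation of $\R$. Hence no choice of lifts of $\bar f|_C, \bar g|_C$ to $\R$ commutes, contradicting Theorem~\ref{WhichM} applied to $M = S^1$ and $G = \Z^2$. The step I expect to be the main obstacle is precisely the extraction of a $\Z^2$-invariant essential circle from $L_+$: upgrading $\bar e'$-invariance to full $\Z^2$-invariance and then cutting the invariant continuum down to a genuine circle will likely require delicate input from prime-ends theory or a careful minimality argument within the lattice of invariant essential compacta in $\A$.
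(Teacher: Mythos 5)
Your strategy is to show that $\bar{e}'$ itself (up to inversion) is conjugate to $\bar{g_0}$, via the ``central claim'' that the circles $\bar{e}'^n(c)$ escape to both ends. This claim is false, and the paper exhibits explicit counterexamples immediately before its proof: take $f(x, y) = (x + y,\, y + l(y))$ with $l(y) = \frac{\sin(\pi(2y + 1/2)) + 1}{8}$ and $g = g_0$. Then $[f, g] = h_0$, so the lifts fail to commute; $\bar{f}$ has the non-intersection property (it moves the image of the $x$-axis off itself), yet the iterated images $f^n(x\text{-axis})$ are trapped below the $f$-invariant line $y = 1/2$, so the circles $\bar{f}^n(c)$ do not escape upward and $\bar{f}$ is not conjugate to $\bar{g_0}$ (it even preserves an essential circle). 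Since all hypotheses of Theorem \ref{LiftedToralTheorem} hold in this example with $\bar{e}' = \bar{f}$, no correct argument can derive your central claim from them; the conclusion of the theorem is phrased with a ``possibly different element'' precisely for this reason.

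The step where your proof of the claim breaks is the promotion of $\bar{e}'$-invariance of $L_+$ to invariance under the whole group. The set $L_+$ is constructed from the particular circle $c$; applying $\bar{g}$ produces the analogous ``outermost separator'' built from $\bar{g}(c)$, and no uniqueness principle forces these to agree. In the example above, $L_+$ is the circle $y = 1/2$, which is $\bar{f}$-invariant but is carried by $\bar{g} = \bar{g_0}$ to $y = 3/2$. (Your endgame is fine \emph{if} full invariance held: an essential compactum invariant under both generators leads, via prime ends and Theorem \ref{WhichM} on the circle, to a contradiction --- this is in effect the paper's Lemma \ref{InvariantDomain}.) The paper instead works with $U$, the $f$-saturation of the strip between the $x$-axis and its $f$-image, and analyzes how $g(U)$ meets $U$: containment is excluded by the prime-end argument, disjointness hands you $\bar{g}$ itself as the desired element, and the overlapping case either yields an element of the form $f^{n+1}g$ whose images of the $x$-axis escape without accumulation, or is ruled out by a crossing analysis using the Schoenflies--Homma theorem and a translation-number computation on prime-end boundary circles. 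Your argument would need to be rebuilt along lines that allow the conjugate-to-$\bar{g_0}$ element to differ from $\bar{e}'$.
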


To prove this result, we use standard tools in surface dynamics and topology, notably Carath\'{e}odory's theory of prime ends, and Schoenflies' Theorem.

\begin{definition}
The (discrete) Heisenberg group is $H = \left\{\left( \begin{array}{ccc}
1 & a & c\\
0 & 1 & b\\
0 & 0 & 1 \end{array} \right)\colon a, b, c \in \Z\right\}$. This is generated by the matrices $$X = \left( \begin{array}{ccc}
1 & 1 & 0\\
0 & 1 & 0\\
0 & 0 & 1 \end{array} \right), \hspace{6pt} Y = \left( \begin{array}{ccc}
1 & 0 & 0\\
0 & 1 & 1\\
0 & 0 & 1 \end{array} \right), \hspace{6pt} Z = \left( \begin{array}{ccc}
1 & 0 & 1\\
0 & 1 & 0\\
0 & 0 & 1 \end{array} \right)$$ which obey the relations $Z = [X, Y] = XYX^{-1}Y^{-1}$ and $X$ and $Y$ commute with Z; indeed, $H$ can be described as the group generated by three abstract elements $X, Y,$ and $Z$ satisfying these relations.
\end{definition}

If $\bar{f}$ and $\bar{g}$ are commuting homeomorphisms homotopic to the identity of the open annulus, and $f, g\colon \R^2 \to \R^2$ are lifts, then $h = [f, g]$ is a covering transformation, and $f$ and $g$ commute with $h$. This yields an action of the Heisenberg group on the plane; if $h \neq id$, then it is a faithful action. Conversely, a Heisenberg action on the plane such that the generator of the center $h$ is conjugate to a nontrivial translation yields, by taking the quotient by $h$, a $\Z^2$ action by homeomorphisms homotopic to the identity on the annulus.

If $f$ and $g_0$ are as in Example \ref{AnnulusExamples}, we will call $\langle f, g_0 \rangle \subset \Homeo(\R^2)$ a \emph{lifted toral Heisenberg group}. Thus, Question \ref{LiftedToralQuestion} and Theorem \ref{LiftedToralTheorem} are equivalently about the question: which Heisenberg actions on the plane, with generator of the center a translation, have image conjugate to a lifted toral Heisenberg group?

\vspace{12pt}

Before closing the Introduction, we would like to remark that questions about lifting group actions can be recast in terms of short exact sequences and splitting.

\begin{remark}
Let $\tilde{M}$ be the universal cover of $M$, and let $\widetilde{\Homeo(M)} \subset \Homeo(\tilde{M})$ denote the group of homeomorphisms of $\tilde{M}$ that are lifts of homeomorphisms of $M$. There is a short exact sequence $1 \to \pi_1(M) \to \widetilde{\Homeo(M)} \to \Homeo(M) \to 1$, where $\pi_1(M)$ embeds in $\widetilde{\Homeo(M)}$ as Deck transformations (once we choose a base point $x \in M$.

If we have an action $\phi \colon G \to \Homeo(M)$, we can ``pull back'' this short exact sequence by $\phi$ to yield a short exact sequence $$1 \to \pi_1(M) \xrightarrow{i} K \xrightarrow{p} G \to 1 \hspace{6pt} (\dagger)$$ where $$K \coloneqq \{(g, \widetilde{\phi(g)}) \in G \times \widetilde{\Homeo(M)} \colon \widetilde{\phi(g)} \text{ is a lift of } \phi(g)\}.$$ If $\phi$ is injective, $K$ can be identified with the set of lifts of $\phi(G)$. See \cite{Ghys}, p. 34.

The action $\phi$ can be lifted to an action $\tilde{\phi}$ if and only if the short exact sequence $(\dagger)$ \emph{splits}, meaning there is a homomorphism $s \colon G \to K$ such that $p \circ s = id_G$. In this case, $K$ is isomorphic to a semidirect product of $\pi_1(M)$ and $G$.

Suppose that the action of $G$ on $M$ is by homeomorphisms homotopic to the identity, and $\tilde{\phi}$ is a lift such that for every $g \in G$, $\tilde{\phi}(g)$ is a homotopy lift of $\phi(g)$. Then, in fact, $K \cong G \times \pi_1(M)$, via $G \times \pi_1(M) \ni (g, h) \mapsto (g, h\cdot \tilde{\phi(g)}) \in K$. This is an isomorphism because $\tilde{\phi}(G)$ commutes with the group of Deck transformations.

More generally, suppose we have an action by homeomorphisms homotopic to the identity which does not necessarily lift. We can consider the short exact sequence $$1 \to \{\text{homotopy lifts of } id_M\} \xrightarrow{i} K_{\text{htpy}} \xrightarrow{p} G \to 1 \hspace{6pt} (\ddagger)$$ that sits inside $(\dagger)$, where $$K_{\text{htpy}} \coloneqq \{(g, \widetilde{\phi(g)}) \in G \times \widetilde{\Homeo(M)} \colon \widetilde{\phi(g)} \text{ is a \emph{homotopy} lift of } \phi(g)\}.$$ Since homotopy lifts commute with Deck transformations, $i(\{\text{homotopy lifts of } id_M\}) \subset Z(K_{\text{htpy}})$, so $K_{\text{htpy}}$ is a \emph{central extension} of $G$.

In Section 3, where $M = \A$ is the open annulus and $G = \Z^2$ acts by homeomorphisms homotopic to the identity, $(\dagger)$ and $(\ddagger)$ coincide: they are both the sequence $$1 \to \Z \to K \to \Z^2 \to 1$$ If this sequence does not split (i.e., the action does not lift), $K$ has the Heisenberg group as a finite-index subgroup.
\end{remark}

\vspace{12pt}

\noindent \textbf{Acknowledgements.} It is a pleasure to thank the following people: John Franks for his encouragement, and many stimulating conversations; Kathryn Mann for valuable comments, including a natural interpretation of the non-lifting example we give on the Heisenberg manifold $H(\R)/H(\Z)$; and Fr\'{e}d\'{e}ric Le Roux for pointing out a mistake in the proof of Theorem \ref{LiftedToralTheorem} in a previous draft, suggesting a correction, and reading the corrected version. I would also like to thank the Technion for its kind hospitality, and the Lady Davis Foundation for its generosity.

\section{Manifolds on which amenable actions homotopic to the identity lift}
\subsection{Homological translation vectors}
For the reader's convenience, we give a review of translation and rotation numbers for circle homeomorphisms, and their homological generalization for homeomorphisms of higher-dimensional manifolds.

Let $f\colon S^1 \to S^1$ be an orientation-preserving homeomorphism. Take a lift $\tilde{f}\colon \R \to \R$, and define the \emph{translation number} to be $\tau(\tilde{f}) = \lim_{n \to \infty} \frac{\tilde{f}^n(0)}{n}$. It turns out that this limit must exist (and would be the same if we replaced 0 with a different point). For a different lift $\tilde{f}'$, we will have $\tilde{f}' - \tilde{f} = n \in \Z$ (a constant), so $\tau(\tilde{f}') - \tau(\tilde{f}) \in \Z$, and hence the \emph{rotation number} $\rho(f) \coloneqq \tau(\tilde{f}) + \Z \in \R/\Z$ is independent of the chosen lift. Often, by abuse of notation, we write $\rho(f) = \alpha$ as a shorthand for $\rho(f) = \alpha + \Z$.

Rotation numbers, introduced by Poincar\'{e} \cite{Poincare}, encode a lot of dynamical information for homeomorphisms of the circle. A homeomorphism $f\colon S^1 \to S^1$ has a fixed point (respectively, a periodic orbit) if and only if $\rho(f) = 0$ (respectively, $\rho(f) \in \Q/\Z$). Moreover, $\rho(f)$ is irrational if and only if $f$ is semi-conjugate to the irrational rotation $R_\rho(f)$. 

The rotation number map $\rho\colon \Homeo_+(S^1) \to \R/\Z$ on the group of orientation-preserving homeomorphisms of the circle is \emph{not} a homomorphism. For instance, if $f$ is a homeomorphism whose graph crosses the line $y = x$ transversely, then $f$ will have a fixed point that is preserved under small perturbations, so for $\epsilon$ small we will have $$0 = \rho(R_\epsilon \circ f) \neq \rho(R_\epsilon) + \rho(f) = \epsilon.$$

The crucial point for us is that if $\mu$ is a Borel probability measure on $S^1$, and $\Homeo_{\mu, +}(S^1)$ denotes the orientation-preserving homeomorphisms that preserve $\mu$, then the restriction of $\rho$ to $\Homeo_{\mu, +}(S^1)$ \emph{is} a homomorphism. To see this, observe that if $f$ preserves the measure $\mu$ and $\tilde{f}$ is a lift, then $\tau(\tilde{f}) = \int_{x \in S^1} (\tilde{f}(\tilde{x}) - \tilde{x})d\mu$, where $\tilde{x}$ is some choice of lift of $x$. Then, if $f$ and $g$ both preserve $\mu$, we have

$$\begin{array}{lll}
       \tau(\tilde{f}\tilde{g}) & = & \int_{S^1}(\tilde{f}\tilde{g}(\tilde{x}) - \tilde{x})d\mu \\
       \noalign{\medskip}
       & = & \int_{S^1}(\tilde{f}\tilde{g}(\tilde{x}) - \tilde{g}(\tilde{x}))d\mu + \int_{S^1}(\tilde{g}(\tilde{x}) - \tilde{x})d\mu \\
       \noalign{\medskip}
       & = & \int_{S^1}(\tilde{f}\tilde{g}(\tilde{x}) - \tilde{g}(\tilde{x}))d\mu + \tau(\tilde{g}) \\
       \noalign{\medskip}
       & = & \int_{S^1}(\tilde{f}(\tilde{x}) - \tilde{x})d(g_*\mu) + \tau(\tilde{g}) \text{ by the change of variables formula}\\
       \noalign{\medskip}
       & = & \int_{S^1}(\tilde{f}(\tilde{x}) - \tilde{x})d\mu + \tau(\tilde{g}) \text{ since } \mu \text{ is } g \text{-invariant}\\
       \noalign{\medskip}
       & = & \tau(\tilde{f}) + \tau(\tilde{g}).
     \end{array}$$

Thus $\tau$ is a homomorphism on the group of lifts of elements in $\Homeo_{\mu, +}(S^1)$, and $\rho$ is a homomorphism on $\Homeo_{\mu, +}(S^1)$.

If $G$ is any amenable group acting on the circle, then there is an invariant probability measure (since $S^1$ is compact), so rotation number acts as a homomorphism on the image of $G$.

This theory generalizes somewhat to higher-dimensional manifolds, via \emph{homological translation vectors}, which measure via $H_1(M, \R)$ how orbits ``wind around the holes'' of $M$. Ideas of this form have a long history. Limits in the first homology group were first studied by Schwartzman \cite{Schwartzman}, who considered flows rather than maps, but let the space be any compact metric space, which is more general than what we consider. Studying limits in homology for maps rather than flows goes back at least to Rhodes \cite{Rhodes}; and for homeomorphisms homotopic to the identity, to Franks \cite{Franks2} and Pollicott \cite{Pollicott}.

Let $M$ be a compact triangulable (for example, smooth) manifold, possibly with boundary. Suppose $f\colon M \to M$ is a homeomorphism homotopic to the identity; let $f_t$ ($t \in [0, 1]$) be a homotopy from the identity to $f$. For a given $x \in M$, one can consider the path $t \mapsto f_t(x)$, and indeed for every $t > 0$, the path $\gamma_x(t) = f_{t - n}(f^n(x))$, where $n$ is the integer part of $t$. To understand how the path $\gamma_x(t)$ moves around the holes of $M$, we would like to assign classes $[\gamma_x(t)] \in H_1(M, \R)$, $t > 0$, and study $\lim_{t \to \infty} \frac{[\gamma_x(t)]}{t}$. The difficulty is that the path $\gamma_x$ may never be closed, so we must choose some way of closing it up.

Let $b$ be a base point of $M$. Choose a simplicial decomposition of $M$. For each $d$-simplex $X^d$ in this decomposition ($0 \leq d \leq \dim(M)$), choose an identification between it and the standard $d$-simplex $$X^d_{\text{std}} = \{(x_1, \ldots, x_{d + 1}) \in \R^{d + 1} \colon x_1 + \ldots + x_{d + 1} = 1\}.$$ Note that the standard $d$-simplex has a central point, the point with all coordinates equal to $\frac{1}{d + 1}$. For any other point, one can take a linear path towards this central point. This defines in $X^d \subset M$ a central point and paths from other points in $X^d$ to this central point.

Let $\{c_1, \ldots, c_m\}$ be the set of all central points, including the $0$-simplices. Choose paths $\alpha_i$ from $c_i$ to the base point $b$. For $x \in M$, let $c_{i(x)}$ be the center of the smallest-dimensional simplex containing $x$, and $\beta_x$ the path we have described from $x$ to $c_{i(x)}$. We define $h_n(x, f_t)$ to be a closed loop based at $b$, given by following $\alpha_{i(x)}$ backwards from $b$ to $c_{i(x)}$, then $\beta_x$ backwards from $c_{i(x)}$ to $x$, then $\gamma_x$ for $n$ units of time, then $\beta_{f^n(x)}$ from $f^n(x)$ to $c_{f^n(x)}$, and finally $\alpha_{i(f^n(x))}$ from $c_{f^n(x)}$ to $b$.

If $f_t$ is clear from context, we can simply write $h_n(x)$. Note that this defines a homology class $[h_n(x)] \in H_1(M, \R)$. It follows directly from the definitions that $[h_{n + m}(x)] = [h_n(x)] + [h_m(f^m(x))]$. We claim that the function $M \to H_1(M, \R)$ defined by $[h_1(x)]$ is bounded and Borel measurable. In fact, there are finitely many sets $Y_i$ given by the simplicial decomposition such that $Y_i$ consists of all the points in some $d$-simplex and not in any $(d - 1)$-simplex. Let $Y_{ij} = f(Y_i) \cap Y_j$.

By Urysohn's Theorem, the topology on $M$ is metrizable, say by a metric $d$. Since $M$ is a toplogical manifold with boundary, for any $x \in M$ there is a neighborhood $x \in U_x \subset M$ such that $U_x$ is homeomorphic to $\R^n$ or the upper half space. For $\epsilon$ small enough, the $\epsilon$-ball centered at $x$ will be contained in such a neighborhood $U_x$, and by compactness of $M$ we can find a uniform $\epsilon$ for every $x \in M$. Since the simplices are homeomorphically embedded in $M$, there is an $\epsilon' \leq \epsilon$ such that if $x, y \in Y_{ij}$ and $d(x, y) < \epsilon'$, then there are paths $p_1 \subset f(Y_i) \cap U_x$ and $p_2 \subset Y_j \cap U_x$ from $x$ to $y$, and these paths must be homotopic relative to $\{x, y\}$. It follows from this that the paths $h_1(x)$ and $h_1(y)$ are homotopic. In particular, $[h_1(x)]$ and $[h_1(y)]$ represent the same element in $H_1(M, \R)$.

Let $\{B_k\}$ be a finite collection of $\epsilon'$-balls covering $M$, and $Y_{ijk} = Y_{ij} \cap B_k$. This is a finite collection of Borel sets, on each of which the function $[h_1(x)]$ is constant. Therefore, $[h_1(x)]$ is bounded and Borel measurable.

If $\mu$ is any $f$-invariant Borel probability measure on $M$, then by Birkhoff's ergodic theorem, $$\tau_x(f_t) \coloneqq \lim_{n \to \infty} \frac{[h_n(x)]}{n}$$ exists for $\mu$-almost every $x \in M$, and further, $$\int \tau_x(f_t) d\mu = \int [h_1(x)] d\mu.$$ This quantity is called the \emph{mean homological translation vector of $f_t$ with respect to $\mu$}, denoted $\tau_\mu(f_t)$.

Let $\tilde{f}$ be a homotopy lift of $f$ to $\tilde{M}$. Let $f_t$ and$f_t'$ be two homotopies from $id$ to $f$ that yield the lift $\tilde{f}$. Observe that $f_t$ and $f_t'$ will be homotopic relative to their endpoints $id$ and $f$, so we will have $\tau_x(f_t') = \tau_x(f_t)$ for every $x$ where $\tau_x(f)$ is defined, and $\tau_\mu(f_t') = \tau_\mu(f_t)$. In particular, we have proved the following

\begin{proposition}
\label{ExistMeanTranslation}
Let $M$ be a compact triangulable manifold, possibly with boundary. Let $f\colon M \to M$ be a homeomorphism that is homotopic to the identity, and let $\tilde{f}\colon \tilde{M} \to \tilde{M}$ be a homotopy lift. Let $\mu$ be an $f$-invariant Borel probability measure. Then there is a well-defined mean homological translation vector $\tau_\mu(\tilde{f}) \in H_1(M, \R)$.
\end{proposition}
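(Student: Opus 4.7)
The plan is to leverage the machinery the author has just set up — the bounded Borel-measurable function $x \mapsto [h_1(x, f_t)] \in H_1(M, \R)$ and the cocycle identity $[h_{n+m}(x)] = [h_n(x)] + [h_m(f^n(x))]$ — and split the argument into two parts: first, invoke Birkhoff's ergodic theorem to see that the pointwise limit $\tau_x(f_t)$ exists $\mu$-almost everywhere with mean $\int [h_1(x)] \, d\mu$; second, show that this integral depends only on the homotopy lift $\tilde{f}$ and not on the particular homotopy $f_t$ realizing it.

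For the first step, I would reduce to the scalar case by fixing a basis of the finite-dimensional real vector space $H_1(M, \R)$. Iterating the cocycle identity gives $[h_n(x)] = \sum_{k=0}^{n-1}[h_1(f^k(x))]$, so $[h_n(x)]/n$ is the $n$-th Birkhoff average of the bounded Borel function $[h_1(\cdot)]$ under the $f$-invariant measure $\mu$. Applying the classical Birkhoff theorem coordinatewise yields $\mu$-a.e.\ convergence to a function $\tau_x(f_t)$, together with the mean formula $\int \tau_x(f_t) \, d\mu = \int [h_1(x)] \, d\mu$.

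For the second step, suppose $f_t$ and $f_t'$ are two homotopies from $id_M$ to $f$ that both produce the homotopy lift $\tilde{f}$. For any $x \in M$ and any lift $\tilde{x}$, the lifted paths $t \mapsto \tilde{f}_t(\tilde{x})$ and $t \mapsto \tilde{f}_t'(\tilde{x})$ both run from $\tilde{x}$ to $\tilde{f}(\tilde{x})$; since $\tilde{M}$ is simply connected, they are homotopic rel endpoints, so projecting back down the paths $\gamma_x$ and $\gamma_x'$ are homotopic rel $\{x, f(x)\}$ in $M$. Closing them up with the same fixed auxiliary paths $\alpha_i, \beta_x$ produces based loops representing the same class in $H_1(M, \R)$, so $[h_1(x, f_t)] = [h_1(x, f_t')]$ pointwise, and $\tau_\mu(\tilde{f}) := \int [h_1(x)] \, d\mu$ is well-defined. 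I do not anticipate any serious obstacle: the delicate aspects (boundedness and Borel measurability of $[h_1(\cdot)]$) have already been handled by the simplicial construction, so what remains is essentially an application of Birkhoff's theorem plus a diagrammatic check in the universal cover.
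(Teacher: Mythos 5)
Your proposal is correct and follows essentially the same route as the paper: Birkhoff's ergodic theorem applied to the bounded Borel function $x \mapsto [h_1(x)]$ (the paper likewise invokes it directly, without your explicit coordinatewise reduction), followed by independence of the choice of homotopy realizing $\tilde{f}$. Your justification of that last step — lifting the trajectory paths to the simply connected cover to see they are homotopic rel endpoints — is a slightly more explicit version of the paper's remark that two homotopies yielding the same lift are homotopic relative to their endpoints, but it is the same idea.
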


Note that $\tau_\mu(\tilde{f})$ depends on the choice of homotopy lift. However, if $\{\sigma_1, \ldots, \sigma_n\}$ is a basis of the torsion-free part of $H_1(M, \Z)$, we know $H_1(M, \R)$ can be regarded as a real vector space with this same basis. If we quotient $H_1(M, \R)$ by the $\Z$-span of $\{\sigma_1, \ldots, \sigma_n\}$, then $\tau_\mu(\tilde{f})$ desends to an element $\rho_\mu(f)$ which is independent of the choice of homotopy lift, which we can call the \emph{mean homological rotation vector} of $f$, in analogy with the circle case.

In general, $\tau_x$ and $\tau_\mu$  are less powerful invariants than on the circle: they do not come close to classifying homeomorphisms up to semi-conjugacy. However, they still have significant dynamical implications. Just to name one among many, if $f\colon T^2 \to T^2$ is a homeomorphism isotopic to the identity, and $\tilde{f}$ is a lift of $f$ such that the origin lies in the interior of the convex hull of the set $\{\tau_x(\tilde{f})\colon x \in T^2\}$, then $f$ has a fixed point (each lift of which is fixed by $\tilde{f}$) \cite{Franks3}.

For us, the important property of the mean translation vector is that -- as with translation number for lifts of circle homeomorphisms -- when restricted to the homeomorphisms preserving a fixed probability measure, it is a homomorphism. More precisely, let $\mathcal{G}(M)$ be the group of homotopy lifts of homeomorphisms of $M$ that are homotopic to the identity. Let $\mathcal{G}_\mu(M)$ be the subgroup of homotopy lifts of homeomorphisms that preserve the probability measure $\mu$.


\begin{proposition}
\label{IsHomomorphism}
The map $\tau_\mu \colon \mathcal{G}_\mu(M) \to H_1(M, \R)$ is a homomorphism; that is, if $\tilde{f}$ and $\tilde{g}$ are homotopy lifts of $f$ and $g$, then $$\tau_\mu(\tilde{g}\tilde{f}) = \tau_\mu(\tilde{g}) + \tau_\mu(\tilde{f}).$$
\end{proposition}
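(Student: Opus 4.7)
The plan is to mirror the circle-case proof reproduced earlier in the section: express $\tau_\mu$ as a $\mu$-integral of a bounded Borel $H_1(M,\R)$-valued cocycle, prove a pointwise cocycle identity for composition of homotopy lifts, and then use the change-of-variables formula together with $\mu$-invariance. Concretely, I would fix homotopies $f_t$ and $g_t$ from $id$ to $f$ and $g$ that realize $\tilde f$ and $\tilde g$, and form the concatenation
\[
H_t = \begin{cases} f_{2t}, & 0 \le t \le 1/2,\\ g_{2t-1}\circ f, & 1/2 \le t \le 1,\end{cases}
\]
whose lift to $\tilde M$ terminates at $\tilde g\tilde f$; this is the homotopy against which I would compute $\tau_\mu(\tilde g\tilde f)$. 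By Birkhoff and the discussion preceding Proposition \ref{ExistMeanTranslation}, one has $\tau_\mu(\tilde f) = \int_M [h_1(x,f_t)]\,d\mu$ and analogous identities for $\tilde g$ and $\tilde g\tilde f$.

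The heart of the argument is the pointwise cocycle identity
\[
[h_1(x,H_t)] = [h_1(x,f_t)] + [h_1(f(x),g_t)] \quad \text{in } H_1(M,\R),
\]
which I would prove by writing the three loops explicitly. The loop $h_1(x,H_t)$ runs $b\to c_{i(x)}\to x$, then along the $H_t$-orbit through $f(x)$ to $gf(x)$, then $gf(x)\to c_{i(gf(x))}\to b$. The concatenation of $h_1(x,f_t)$ followed by $h_1(f(x),g_t)$ traces the same path but inserts the round trip $f(x)\to c_{i(f(x))}\to b\to c_{i(f(x))}\to f(x)$ between the two orbital segments. Since this inserted loop retraces itself step-by-step, it is null-homotopic, so the two loops are freely homotopic at $b$ and represent the same class in $H_1(M,\R)$.

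Granted the cocycle identity, I would integrate against $\mu$ and apply the change-of-variables formula to the second summand, using $f_\ast\mu = \mu$:
\[
\int_M [h_1(f(x),g_t)]\,d\mu(x) = \int_M [h_1(y,g_t)]\,d(f_\ast\mu)(y) = \int_M [h_1(y,g_t)]\,d\mu(y) = \tau_\mu(\tilde g).
\]
Combining this with $\int_M [h_1(x,f_t)]\,d\mu = \tau_\mu(\tilde f)$ gives the desired $\tau_\mu(\tilde g\tilde f) = \tau_\mu(\tilde f) + \tau_\mu(\tilde g)$. The one genuinely nontrivial step is the cocycle identity, which is purely topological bookkeeping about how the auxiliary closing paths $\alpha_i,\beta_x$ cancel; everything else is the measure-theoretic manipulation already carried out for the circle. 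A minor subtlety worth flagging is to check that the value of $\tau_\mu$ is independent of the choice of homotopy realizing a given lift, but this was already noted in the discussion preceding Proposition \ref{ExistMeanTranslation}, so both $\tau_\mu(\tilde f)$, $\tau_\mu(\tilde g)$, and $\tau_\mu(\tilde g\tilde f)$ are unambiguous once $\tilde f$ and $\tilde g$ are fixed.
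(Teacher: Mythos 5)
Your proposal is correct and follows exactly the route the paper intends: the paper's proof consists of the remark that the computation is the same as the displayed circle calculation (change of variables plus $\mu$-invariance, with details deferred to Franks), and your write-up simply fills in those details, the pointwise cocycle identity being the homological analogue of the telescoping step $\tilde f\tilde g(\tilde x)-\tilde x=(\tilde f\tilde g(\tilde x)-\tilde g(\tilde x))+(\tilde g(\tilde x)-\tilde x)$. No gaps; the well-definedness point you flag is indeed covered by the discussion preceding Proposition \ref{ExistMeanTranslation}.
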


The calculation is essentially the same as the one above for the circle; it uses the change of variables formula in integration, together with the fact that $\mu$ is an invariant measure. See \cite{Franks} for details.

\subsection{Proof of Theorem \ref{WhichM}}
\begin{theorem}
\label{CommutingLifts}
Let $M$ be a manifold, possibly with boundary. Suppose $M$ satisfies at least one of the following conditions:

\begin{enumerate}

\parenitem $Z(\pi_1(M))$ is trivial.

\parenitem $M$ is compact and triangulable (for example, smooth), and for every $h \in Z(\pi_1(M))$ such that some nonzero power $h^n$ lies in $[\pi_1(M), \pi_1(M)]$, $h$ is trivial.

\parenitem $M$ has a boundary component satisfying (1) or (2).
\end{enumerate}

If $G$ is any finitely-generated amenable group with torsion-free abelianization, then any action $\phi$ of $G$ on $M$ by homeomorphisms homotopic to the identity lifts to an action $\tilde{\phi}$ on the universal cover $\tilde{M}$ such that $\tilde{\phi}(g)$ is a homotopy lift of $\phi(g)$ for every $g \in G$.
\end{theorem}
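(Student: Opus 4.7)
The plan is to pick generators $g_1, \dots, g_k$ of $G$ and homotopy lifts $\tilde f_i$ of $\phi(g_i)$, producing a homomorphism $\Phi\colon F_k \to \widetilde{\Homeo(M)}$, and then show that after adjusting the $\tilde f_i$, $\Phi$ descends to $G$. If $w \in F_k$ represents the identity in $G$, then $\Phi(w)$ is a lift of $id_M$; since each $\tilde f_i$ commutes with deck transformations, so does $\Phi(w)$, placing it in $Z(\pi_1(M))$, and in fact in the subgroup $\mathcal{D} \subseteq Z(\pi_1(M))$ of homotopy lifts of $id_M$ (products of homotopy lifts are homotopy lifts, via concatenation of homotopies). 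Under condition (1), $\mathcal{D} = 1$ and we are done.

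For condition (2), use amenability of $G$ together with compactness of $M$ to produce a $\phi$-invariant Borel probability measure $\mu$; then each $\tilde f_i \in \mathcal{G}_\mu(M)$, so by Proposition \ref{IsHomomorphism} the map $\tau_\mu \circ \Phi \colon F_k \to H_1(M, \R)$ is a homomorphism and vanishes on $[F_k, F_k]$. The key computation is that for $T \in \mathcal{D}$, unpacking the definition of $h_1(x)$ gives $\tau_\mu(T) = [T] \in H_1(M, \R)$, the image of $T$ in real homology under the identification $\pi_1(M) \cong \mathrm{Deck}(\tilde M/M)$. Hence for $w \in [F_k, F_k] \cap \ker(F_k \to G)$, $[\Phi(w)] = \tau_\mu(\Phi(w)) = 0$, so $\Phi(w)$ is torsion in $H_1(M, \Z)$, some nonzero power lies in $[\pi_1(M), \pi_1(M)]$, and condition (2) forces $\Phi(w) = id$.

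Consequently $\Phi|_{\ker(F_k \to G)}$ descends to a homomorphism $\bar\alpha \colon N \to \mathcal{D}$, where $N \coloneqq \ker(\Z^k \to G^{\mathrm{ab}})$. Because $G^{\mathrm{ab}}$ is finitely generated and torsion-free, it is free abelian, so $N$ is a direct summand of $\Z^k$; hence the homomorphism $\bar\alpha^{-1}$ extends to $\sigma \colon \Z^k \to \mathcal{D}$. Replacing each $\tilde f_i$ by $\sigma(e_i)\tilde f_i$---still a homotopy lift of $\phi(g_i)$---and using that $\mathcal{D}$ commutes with every $\tilde f_j$, one verifies that the new $\Phi'$ satisfies $\Phi'(w) = \sigma(w^{\mathrm{ab}})\Phi(w)$ for all $w \in F_k$; on a relation $w$ this becomes $\sigma(w^{\mathrm{ab}})\bar\alpha(w^{\mathrm{ab}}) = id$, so $\Phi'$ descends to the desired lift $\tilde\phi\colon G \to \widetilde{\Homeo(M)}$. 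Condition (3) reduces to (1) or (2) applied to the distinguished boundary component, with some care required because homeomorphisms merely homotopic to the identity need not preserve each boundary component setwise. The step I anticipate being the main technical obstacle is the identity $\tau_\mu(T) = [T]$ for $T \in \mathcal{D}$, which bridges the Birkhoff-average construction of $\tau_\mu$ with the deck-group description invoked by condition (2).
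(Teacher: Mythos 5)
Your argument is correct and is essentially the paper's proof: the same invariant measure from amenability and compactness, the same homomorphism $\tau_\mu$ on homotopy lifts, the same use of torsion-freeness of $G^{\mathrm{ab}}$, and the same decisive step showing a central homotopy lift of $id_M$ with zero mean translation vector is trivial under condition (2) --- the identity $\tau_\mu(T) = [T]$ you flag as the main obstacle is exactly what the paper establishes by observing that the homotopy from $id_M$ to $id_M$ traces closed loops $\gamma_x$ that are freely homotopic, hence represent one class which is torsion in $H_1(M, \Z)$ and central in $\pi_1(M)$, so it is not a real obstacle. The only differences are bookkeeping (the paper defines the lift directly on normal forms $g = g_1^{m_1}\cdots g_n^{m_n}g_0$ with $\tau_\mu(\tilde{\phi}(g_0)) = 0$ on the commutator part and verifies relations, whereas you correct arbitrary generator lifts by a homomorphism extended across the splitting $\Z^k \cong N \oplus G^{\mathrm{ab}}$ --- an equivalent use of torsion-freeness) and your deferral on condition (3), which the paper also treats only briefly by restricting to the distinguished boundary component.
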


\begin{proof}
It is easy to see that if conditions (1) and (2) are sufficient conditions for a lift to exist, then so is condition (3). Indeed, assume the sufficiency of (1) and (2), and suppose $M$ satisfies (3). Let $C$ be a component of $\partial M$ satisfying condition (1) or (2). Let $\pi\colon \tilde{M} \to M$ be the universal cover, and let $\tilde{C}$ be a component of $\pi^{-1}(C)$. The restriction $\pi|_{\tilde{C}}\colon \tilde{C} \to C$ is a universal cover. Since $C$ satisfies (1) or (2), we may lift $\phi|_C$ to an action $\widetilde{\phi|_C}$ on $\tilde{C}$. But for every $g \in G$, $\widetilde{\phi|_C}(g)$ is the restriction to $\tilde{C}$ of a homotopy lift $\tilde{\phi}(g)$ of $\phi(g)$. The homotopy lifts $\tilde{\phi}(g)$ have the correct relations to define an action of $G$, since they obey the correct relations on $\tilde{C}$.

The sufficiency of condition (1) is also elementary. Suppose that (1) holds. We have seen that two homotopy lifts differ by an element of the center of the group of Deck transformations. If $\pi_1(M) \cong \{$Deck transformations$\}$ has trivial center, then homotopy lifts are unique. If we let $\tilde{\phi}(g)$ be the unique homotopy lift of $\phi(g)$ for every $g \in G$, this defines an action of $G$ since for any $g_1, g_2 \in G$, $\tilde{\phi}(g_1g_2)$ and $\tilde{\phi}(g_1)\tilde{\phi}(g_2)$ are both the unique homotopy lift of $\phi(g_1g_2)$.

The key part of the theorem is sufficiency of condition (2). We have assumed that the abelianization $G/[G, G]$ is torsion-free. Also, $G$ is finitely-generated, and hence so is $G/[G, G]$. Therefore, $G/[G, G] \cong \Z^n$ for some $n$. Thus we may choose $g_1, \ldots, g_n \in G$ such that $g_1[G, G], \ldots, g_n[G, G]$ form a basis for $G/[G, G]$. Choose $\tilde{\phi}(g_i)$ to be arbitrary homotopy lifts of $\phi(g_i)$.

Since $M$ is compact and $G$ is amenable, there is a $\phi$-invariant Borel probability measure $\mu$ on $M$. Since $M$ is compact and triangulable, by Proposition \ref{ExistMeanTranslation} there is a well-defined mean homological translation vector $\tau_\mu$, which by Proposition \ref{IsHomomorphism} is a homomorphism on the group of homotopy lifts of homeomorphisms homotopic to the identity on $M$ that preserve $\mu$.

Now for any $g \in [G, G]$, there exists a homotopy lift $\tilde{\phi}(g)$ such that $\tau_\mu(\tilde{\phi}(g)) = 0 \in H_1(M, \R)$. To see this, write $g = [h_1, h_2]\cdots[h_{2k - 1}, h_{2k}]$. Choose arbitrary homotopy lifts $\widetilde{\phi(h_i)}$, and observe that $\tau_\mu([\widetilde{\phi(h_1)}, \widetilde{\phi(h_2)}]\cdots[\widetilde{\phi(h_{2k - 1})}, \widetilde{\phi(h_{2k})}]) = 0$ since $\tau_\mu$ is a homomorphism.

Every $g \in G$ can be uniquely represented as $g = g_1^{m_1}\cdots g_n^{m_n}g_0$, where $g_0 \in [G, G]$. Define $\tilde{\phi}(g) = \tilde{\phi}(g_1)^{m_1}\cdots \tilde{\phi}(g_n)^{m_n}\tilde{\phi}(g_0)$, where $\tau_\mu(\tilde{\phi}(g_0)) = 0$. We claim this defines an action of $G$. Thus, if $h_1, \ldots, h_k$ are elements of $G$ such that $h_1\cdots h_k = id$, we must show that $\tilde{\phi}(h_1)\cdots\tilde{\phi}(h_k) = id_{\tilde{M}}$.

Since $h_1\cdots h_k = id$, in particular the projections of the $h_i$ in $G/[G, G] \cong \Z^n$ sum to 0, so $\tau_\mu(\tilde{\phi}(h_1)\cdots\tilde{\phi}(h_k)) = \tau_\mu(\tilde{\phi}(h_1)) + \ldots + \tau_\mu(\tilde{\phi}(h_k)) = 0$. Thus, $\tilde{\phi}(h_1)\cdots\tilde{\phi}(h_k)$ is a homotopy lift of the identity with mean translation vector 0.

If $f_t\colon M \to M$ is a homotopy corresponding to $\tilde{\phi}(h_1)\cdots\tilde{\phi}(h_k)$ (so $f_0 = f_1 = id_M$), this induces a closed loop $\gamma_x$ at each point $x \in M$. For every $x$, $\tau_x(f_t)$ is simply the element of $H_1(M, \R)$ induced by $\gamma_x$. Also, all the $\gamma_x$ are freely homotopic, and hence homologous, so they all induce the same element of $H_1(M, \R)$. Since $\tau_\mu(f_t) = 0$, we have $\tau_x(f_t) = [\gamma_x] = 0 \in H_1(M, \R)$ for every $x \in M$. The class $[\gamma_x] \in H_1(M, \Z)$ is a torsion element, so for some $k \geq 1$, $k[\gamma_x] = 0 \in H_1(M, \Z)$. Since $H_1(M, \Z) = \pi_1(M)/[\pi_1(M), \pi_1(M)]$, the $k$th power of the homotopy class of $\gamma_x$ in $\pi_1(M, x)$ lies in the commutator subgroup $[\pi_1(M, x), \pi_1(M, x)]$.

It is also true that the homotopy class of $\gamma_x$ lies in the center of $\pi_1(M, x)$: we have observed that homotopy lifts commute with all Deck transformations, so that homotopy lifts of the identity are central in the group of Deck transformations. By assumption, $M$ is such that any central element of $Z(\pi_1(M))$ with a positive power in $[\pi_1(M), \pi_1(M)]$ is trivial; thus, $\gamma_x$ is homotopically trivial, and $\tilde{\phi}(h_1)\cdots\tilde{\phi}(h_k) = id_{\tilde{M}}$.
\end{proof}

\begin{remark}
If $M$ satisfies condition (1), note that \emph{any} group action by homeomorphisms homotopic to the identity lifts to the universal cover. This is the case, for example, for surfaces of negative Euler characteristic.
\end{remark}

As a corollary of Theorem \ref{CommutingLifts}, we can prove Theorem \ref{WhichM} from the Introduction:

\WhichMLifts*

\begin{proof}
$S^1$ satisfies condition (2).

If $S$ is a surface with nonempty boundary, a component of this boundary will be homeomorphic to $\R$ or $S^1$, so the result follows from condition (3). Suppose $S$ is a surface without boundary. If $S$ is non-compact, then $\pi_1(S)$ is isomorphic to a free group, which has trivial center unless it is isomorphic to $\Z$. Up to homeomorphism, the only non-compact surface with fundamental group isomorphic to $\Z$ is the open annulus. For proofs of these facts, see \cite{A&S}. So for non-compact surfaces, the result follows from condition (1).

Among closed surfaces, the result is trivial for $S^2$; the fundamental groups of surfaces of genus $\geq 2$ are explicitly known, and have trivial center, so again the result follows from condition (1). The torus satisfies condition (2).


Now consider the connected sum $M \# N$, where $M$ and $N$ are non-simply-connected $n$-manifolds ($n \geq 3)$. Then $\pi_1(M \# N) \cong \pi_1(M) * \pi_1(N)$ by Van Kampen's Theorem; since $\pi_1(M \# N)$ is a nontrivial free product, it has trivial center, so $M \# N$ satisfies (2).

Finally, consider compact 3-manifolds. If $M$ has nonempty boundary, this boundary will be a closed 2-manifold, so $M$ will satisfy condition (3). Therefore we may restrict our attention to closed 3-manifolds. Our strategy will be to use condition (2) of Theorem \ref{CommutingLifts}. These manifolds are compact and triangulable, so we need only check the condition on the fundamental group. By the above, we need only consider the manifolds that are prime. There is only one closed orientable 3-manifold that is prime and not irreducible, namely $S^2 \times S^1$. This has fundamental group isomorphic to $\Z$, and hence satisfies (2).

We claim that among closed, orientable, irreducible 3-manifolds, the only ones whose fundamental group has non-trivial center are the Seifert-fibered manifolds. To show this, we need the following result, whose proof depends on the Thurston Geometrization Theorem. Note that if $M \subset N$ is a connected submanifold of a 3-manifold $N$ with incompressible boundary, then the map $\pi_1(M) \to \pi_1(N)$ induced by the inclusion is injective, so $\pi_1(M)$ can be regarded as a subgroup of $\pi_1(N)$.

\begin{theorem}[Theorem 3.1 of \cite{AFW}]
Let $N$ be a compact, orientable, irreducible 3-manifold with empty or toroidal boundary. Write $\pi = \pi_1(N)$. Let $g \in \pi$ be non-trivial. If the centralizer $C_\pi(g)$ is non-cyclic, then one of the following holds:

\begin{enumerate}
\parenitem There exists a JSJ torus $T$ and $h \in \pi$ such that $g \in h\pi_1(T)h^{-1}$ and such that $$C_\pi(g) = h\pi_1(T)h^{-1};$$

\parenitem There exists a boundary component $S$ and $h \in \pi$ such that $g \in h\pi_1(S)h^{-1}$ and such that $$C_\pi(g) = h\pi_1(S)h^{-1};$$

\parenitem There exists a Seifert fibered component $M$ and $h \in \pi$ such that $g \in h\pi_1(M)h^{-1}$ and such that $$C_\pi(g) = hC_{\pi_1(M)}(h^{-1}gh)h^{-1}.$$
\end{enumerate}
\end{theorem}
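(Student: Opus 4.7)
The plan is to combine Thurston's Geometrization Theorem with Bass-Serre theory applied to the JSJ decomposition of $N$. The JSJ decomposition cuts $N$ along a canonical finite collection of incompressible tori into pieces, each either Seifert fibered or atoroidal; by Geometrization the atoroidal pieces carry complete finite-volume hyperbolic structures on their interiors. The decomposition expresses $\pi = \pi_1(N)$ as the fundamental group of a graph of groups, with vertex groups the $\pi_1$'s of the JSJ pieces (and of boundary components) and edge groups $\Z^2$'s coming from the JSJ tori. A crucial input is that this splitting is acylindrical: there is a uniform bound on the diameter of subtrees of the associated Bass-Serre tree $T$ fixed by non-trivial elements of $\pi$.

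I would analyze the action of $g$ on $T$. If $g$ acts loxodromically, every element of $C_\pi(g)$ preserves the translation axis of $g$; acylindricity together with abelianness of edge stabilizers then forces $C_\pi(g)$ to be infinite cyclic, contradicting the hypothesis. So $g$ is elliptic. If $g$ fixes an edge of $T$, then up to conjugation $g \in \pi_1(T_0) \cong \Z^2$ for a JSJ torus $T_0$, and acylindricity combined with the malnormality of peripheral tori in each adjacent piece forces $C_\pi(g) = \pi_1(T_0)$ (back to $h\pi_1(T_0)h^{-1}$ after undoing the conjugation), yielding conclusion (1).

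Otherwise $g$ fixes a unique vertex $v$, so after conjugation $g$ lies in the vertex group $\pi_1(M)$ of a single JSJ piece (or boundary component) $M$, and acylindricity guarantees $C_\pi(g) = C_{\pi_1(M)}(g)$. If $M$ is Seifert fibered, this immediately gives conclusion (3). If $M$ is hyperbolic, I would invoke the classical classification of centralizers in discrete torsion-free subgroups of $\mathrm{Isom}(\mathbb{H}^3)$: the centralizer of a loxodromic element is infinite cyclic, while the centralizer of a parabolic element is contained in the peripheral $\Z^2$ stabilizing its fixed cusp. The non-cyclic hypothesis thus forces $g$ to be parabolic and peripheral in some torus $T_0$, and $T_0$ is either a JSJ torus (case (1)) or a boundary component of $N$ (case (2)).

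The main obstacle is the passage from a centralizer computation inside one JSJ piece to a centralizer computation in the ambient $\pi$. In Bass-Serre language this is exactly the acylindricity of the JSJ splitting, together with the fact that peripheral tori are malnormal inside each JSJ piece (with the well-known exceptions in the Seifert fibered and small Seifert-fibered-hyperbolic gluings, which must be handled directly and do not cause problems here because the conclusion in those cases is absorbed into (3) or (1)). These structural facts rest on the incompressibility and non-parallelism of the JSJ tori and on the geometry of hyperbolic pieces near their cusps; they are the technical heart of the argument, and the reason the theorem is packaged as a consequence of Geometrization rather than a purely combinatorial statement about graphs of groups.
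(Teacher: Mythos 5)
First, a point of comparison: the paper does not prove this statement at all --- it is quoted verbatim as Theorem 3.1 of the survey \cite{AFW}, and the author only remarks that its proof rests on Geometrization. So there is no in-paper argument to measure yours against; I can only assess your sketch on its own terms. Your overall strategy (JSJ decomposition, Geometrization for the atoroidal pieces, centralizer computations in Seifert fibered and finite-volume hyperbolic pieces, then promotion to the ambient group via the graph-of-groups structure) is indeed the standard route, and the loxodromic case and the ``$g$ fixes a unique vertex'' case are handled essentially correctly. However, the actual argument in the literature promotes local centralizers to global ones via Jaco--Shalen/Johannson characteristic submanifold theory rather than via a blanket acylindricity claim, and that is not a cosmetic difference: it is exactly where your sketch has gaps.

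Two concrete problems. (i) The JSJ Bass--Serre action is \emph{not} acylindrical in the generality you need. If $N$ is a Sol torus bundle, the (single) JSJ torus gives a splitting in which the fiber $\Z^2$ is normal and fixes the entire Bass--Serre tree; more generally, whenever $g$ lies in an edge group and is a power of the regular fiber of an adjacent Seifert fibered piece $M$, $g$ is central in $\pi_1(M)$ and fixes the whole star of that vertex, so no uniform bound on fixed subtrees of the form you invoke is available for such elements. (ii) Relatedly, peripheral tori are \emph{not} malnormal in Seifert fibered pieces --- the fiber is central --- so your claim that ``$g$ fixes an edge'' forces $C_\pi(g) = h\pi_1(T)h^{-1}$ (conclusion (1)) is simply false as stated: for $g$ a power of the fiber of $M$, the centralizer contains all of $\pi_1(M)$, and the correct conclusion is (3), not (1). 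This Seifert-fiber phenomenon is precisely the reason conclusion (3) appears in the theorem, so relegating it to a parenthetical ``well-known exceptions \ldots\ absorbed into (3) or (1)'' defers the heart of the trichotomy rather than proving it. To close the gap you would need to separate the edge-stabilizer case according to whether $g$ is conjugate into the fiber subgroup of an adjacent Seifert piece, prove the appropriate weak malnormality/acylindricity statements where the adjacent pieces are hyperbolic (this is where incompressibility, non-parallelism of JSJ tori, and cusp geometry genuinely enter), and treat the Sol and $T^2\times I$ degenerate cases separately --- at which point you are essentially reconstructing the characteristic-submanifold argument of \cite{AFW}.
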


In our case, suppose there is a nontrivial element $g \in Z(\pi_1(N))$; then, in the notation of the theorem, $C_\pi(g) = \pi$. If (1) held, then we would have $\pi \cong \Z^2$, but this does not arise as the fundamental group of a closed 3-manifold (see \cite{AFW}, Table 2). We are assuming that $N$ has no boundary, so we need not worry about (2). Suppose (3) holds. We can let $h = 1$, since $g$ is central in $\pi$. Therefore, the centralizer of $g$ in $\pi_1(M)$ is equal to $C_\pi(g) = \pi$; in particular, $\pi_1(M) = \pi_1(N)$. This implies that $M = N$, by Theorem 2.5 of \cite{AFW}, so $N$ is Seifert-fibered.

Seifert-fibered 3-manifolds always admit one of the Thurston geometries; that geometry can be any except Sol and hyperbolic. We have already discussed the manifold $S^2 \times S^1$, the unique closed orientable manifold with $S^2 \times \R$ geometry. We are left with the spherical, Euclidean, Nil, $\mathbb{H}^2 \times \R$, and $\widetilde{SL(2, \R)}$ geometries.

Suppose $M$ has spherical, Euclidean, or $\mathbb{H}^2 \times \R$ geometry. By Table 1 of \cite{AFW}, there is a finite covering $M'$ of $M$ such that $M'$ satisfies condition (2). As in the proof of Theorem \ref{CommutingLifts}, suppose $g_1, \ldots, g_n$ are generators of $G$ whose projection to $G/[G, G]$ form a basis. Choose homotopies from $\phi(g_i)$ to the identity, and let $\phi(g_i)'$ be the homotopy lifts to $M'$.

This may not yield an action of $G$. However, let $G' \subset G \times \Homeo(M')$ be generated by $\{(g_i, \phi(g_i)')\}$. Let $pr_1$ and $pr_2$ be projection to the 1st or 2nd coordinate, and $\pi$ be given by $\pi(f') = f$ whenever $f$ is a homeomorphism of $M$ and $f'$ is a lift to $M'$. Then $pr_2$ yields an action of $G'$ on $M'$, and $\pi \circ pr_2 = \phi \circ pr_1$.


Note that $\ker(pr_1) = \{\phi(g_{i_1})'\cdots\phi(g_{i_j})' \colon g_{i_1}\cdots g_{i_j} = id_G\}$. This is finite, because $M'$ is a finite covering of $M$, so $G'$ is amenable. Also, every element of $\ker(pr_1)$ is a Deck transformation, and these commute with the homotopy lifts $\phi(g_i)'$, so $G'$ is a central extension of $G$. In effect, it may have some additional torsion elements in its center. However, it is evident that $G'/[G', G']$ is torsion-free. Therefore, by Theorem \ref{CommutingLifts}, the action of $G'$ on $M'$ lifts to an action of $G'$ on the universal cover $\tilde{M}$.

If $G' \neq G$, this means there are finite-order covering transformations for the cover $\tilde{M} \to M$; that is, $\pi_1(M)$ has torsion. If the geometry of $M$ is Euclidean or $\mathbb{H}^2 \times \R$, this is not the case: any aspherical manifold has torsion-free fundamental group. Therefore, in these cases, we have really found a lifted action $\tilde{\phi}$ of $G$ (such that $\tilde{\phi}(g)$ is a homotopy lift of $\phi(g)$ for every $g \in G$). This finishes the proof.

\end{proof}

\subsection{Examples}

We have already seen in Example \ref{AnnulusExamples} that on the open annulus, there is a large class of lifted toral $\Z^2$ actions, homotopic to the identity, which fail to lift to $\Z^2$ actions on the universal cover. Here, we give some other manifolds (not satisfying the conditions of Theorem \ref{CommutingLifts}) for which such examples exist.

\begin{example}[$\R P^3$]
$\R P^3$ has universal cover $S^3$, with one nontrivial Deck transformation: $x \mapsto -x$. Consider the matrices $$A = \left( \begin{array}{cccc}
1 & 0 & 0 & 0\\
0 & -1 & 0 & 0\\
0 & 0 & 1 & 0\\
0 & 0 & 0 & -1\end{array} \right), \hspace{6pt} B = \left( \begin{array}{cccc}
0 & 1 & 0 & 0\\
1 & 0 & 0 & 0\\
0 & 0 & 0 & 1\\
0 & 0 & 1 & 0\end{array} \right).$$ They commute with the matrix $$C = \left( \begin{array}{cccc}
-1 & 0 & 0 & 0\\
0 & -1 & 0 & 0\\
0 & 0 & -1 & 0\\
0 & 0 & 0 & -1\end{array} \right),$$ and indeed can be connected to the identity by a continuous path of matrices commuting with $C$. Thus, the homeomorphisms of $\R P^3$ that they define under projection are isotopic to the identity on $\R P^3$. However, their commutator is $C$, so the lifts of these homeomorphisms to $S^3$ do not commute (and have commutator $x \mapsto -x$). Obviously, as claimed in the proof of Theorem \ref{WhichM}, up to finite index these lifts do commute. They define an action of the group $G' \cong \langle a, b, c\colon [a, b] = c, [a, c] = [b, c] = id, c^2 = id\rangle$.
\end{example}

\begin{example}[$H(\R)/H(\Z)$]
Let $H(\R)$ denote the real Heisenberg group, and $H(\Z) = H$ the integer lattice in $H(\R)$. Let $M$ be the closed three-dimensional nilmanifold $H(\R)/H(\Z)$. The universal cover $\tilde{M}$ is diffeomorphic to $\R^3$. In fact, $M$ can be realized as the quotient of $\R^3$ by the following maps: $$S(x, y, z) = (x + 1, y, z), \hspace{6pt} T(x, y, z) = (x, y + 1, z), \hspace{6pt} U(x, y, z) = (x + y, y, z + 1).$$ These Deck transformations commute with  $$\tilde{j_t}(x, y, z) = (x + tz, y + t, z), \hspace{6pt} \tilde{k_t}(x, y, z) = (x, y, z + t),$$ so these induce isotopies on $M$. Moreover, if we set $\tilde{j} = \tilde{j_1}$ a lift of $j\colon M \to M$ and $\tilde{k} = \tilde{k_1}$ a lift of $k\colon M \to M$, we have $[\tilde{j}, \tilde{k}] = S$. Thus homotopy lifts of $j$ and $k$ do not commute.

Kathryn Mann pointed out that this example can be understood abstractly as follows: make $H(\R)$ act on itself by multiplication on the left. This descends to an action on $H(\R)/H(\Z)$. We can restrict this to an $H(\Z)$ action on $H(\R)$, which descends to a $\Z^2$ action on $H(\R)/H(\Z)$.

We note that this example is closely related to Example \ref{AnnulusExamples}. If we disregard the $y$-coordinate, which we can do because $\tilde{j}$ and $\tilde{k}$ do not depend on the $y$-coordinate except in the $y$-coordinate, we are left with $\tilde{j}'(x, z) = (x + z, z)$ and $\tilde{k}'(x, z) = (x, z + 1)$, which is exactly $f_0$ and $g_0$ respectively.

Observe also that, although homotopy lifts of $j$ and $k$ do not commute, in this case there do \emph{exist} commuting lifts. Namely, since $[T, U] = S^{-1}$, the lifts $T\tilde{j}$ and $U\tilde{k}$ commute.
\end{example}

\begin{question}
Up to conjugacy, what are the possible $\Z^2$ actions by homeomorphisms of $H(\R)/H(\Z)$ homotopic to the identity with non-commuting homotopy lifts?
\end{question}

We can give some simple information in this direction. Suppose that $j$ and $k$ are commuting homeomorphisms homotopic to the identity on $M$, $\tilde{j}$ and $\tilde{k}$ are homotopy lifts, and $\tilde{j}$ has a fixed point $p$. Then $\tilde{j}$ and $\tilde{k}$ must commute. Otherwise, $[\tilde{j}, \tilde{k}] = S^n$ for some $n \neq 0$. Since $\tilde{j}$ commutes with the group $D$ of Deck transformations, the $D$-orbit $D(p)$ is pointwise fixed by $\tilde{j}$.

Now note that $\tilde{k}\tilde{j}\tilde{k}^{-1} = S^{-n}\tilde{j}$. The fixed point set of $\tilde{k}\tilde{j}\tilde{k}^{-1}$ contains $\tilde{k}(D(p)) = D(\tilde{k}(p))$, and the same must hold for $S^{-n}\tilde{j}$. In fact, for every $i$, the fixed point set of $S^{in}\tilde{j}$ contains $D(\tilde{k}^{-i}(p))$. There is a number $N$ large enough so that for any point $q \in \R^3$, $\sup_i \dist(q, D(\tilde{k}^{-i}(p))) < N$. Thus, the ball $B_N(q)$ contains points moved by $\tilde{j}$ a distance $i\cdot n$ units, for every $i$, which implies that $\tilde{j}$ is not continuous, a contradiction.

\vspace{12pt}

The reader can easily check the following facts. Suppose that $\tilde{k}(x, y, z) = (x, y, z + 1)$ as above. Then $\tilde{j}$ commutes with the Deck transformations and satisfies $[\tilde{j}, \tilde{k}] = S$ if and only if it has the form $$\tilde{j}(x, y, z) = (\phi_1(y, z) + x + z, y + 1, \phi_2(y, z) + z),$$ where $\phi_i(y + 1, z) = \phi_i(y, z + 1) = \phi_i(y, z)$ for $i = 1, 2$.

Suppose that $\tilde{j}(x, y, z) = (x + z, y + 1, z)$ as above. Then $\tilde{k}$ commutes with the Deck transformations and satisfies $[\tilde{j}, \tilde{k}] = S$ if and only if it has the form $$\tilde{k}(x, y, z) = (\phi_1(y, z) + x, \phi_2(y, z) + y, z + 1),$$ where $$\phi_1(y + 1, z) = \phi_1(y, z), \phi_1(y, z + 1) = \phi_1(y, z) + \phi_2(y, z)$$ and $$\phi_2(y + 1, z) = \phi_2(y, z + 1) = \phi_2(y, z).$$

\begin{question}
The manifolds with Thurston geometry $\widetilde{SL(2, \R)}$, essentially twisted circle bundles over higher-genus surfaces, do not satisfy the conditions of Theorem \ref{CommutingLifts}. Do they admit commuting homeomorphisms homotopic to the identity with non-commuting homotopy lifts?
\end{question}

\section{$\Z^2$ actions on the annulus and $H$ actions on the plane}
This section is devoted to the proof of Theorem \ref{LiftedToralTheorem} from the Introduction:

\LiftedToralTheorem*

\begin{definition}
Let $\bar{f}\colon \A \to \A$ be a homeomorphism that is isotopic to the identity. We say that $f$ \emph{has the intersection property} if, for every essential circle $c \subset \A$, $\bar{f}(c) \cap c \neq \emptyset$. Otherwise, we say it \emph{has the non-intersection property}.
\end{definition}

Let us fix some notation. As before, we let $f_0, g_0, h_0$ be the homeomorphisms of the plane given by $$f_0(x, y) = (x + y, y), \hspace{6pt} g_0(x, y) = (x, y + 1), \hspace{6pt} h_0(x, y) = (x + 1, y).$$

\noindent We let $\phi\colon H \to \Homeo(\R^2)$ denote a Heisenberg action on the plane, and $$f = \phi(X), \hspace{6pt} g = \phi(Y), \hspace{6pt} h = \phi(Z),$$ and we assume that $h$ is conjugate to a translation. We set $\A = \R^2/h$, and for any homeomorphism commuting with $h$, denote its projection to $\A$ with a bar. We let $G = \phi(H) = \langle f, g\rangle$, and $\bar{G} = \langle \bar{f}, \bar{g}\rangle$, and we seek an element $e \in G$ whose projection $\bar{e} \in \bar{G}$ is conjugate to $\bar{g_0}$.

\vspace{12pt}

We first mention two corollaries of Theorem \ref{LiftedToralTheorem}.

\begin{corollary}
Without loss of generality, we may assume that the elements of $G$ are orientation-preserving, and hence the elements of $\bar{G}$ are isotopic to the identity (i.e., they do not interchange the ends of the annulus).
\end{corollary}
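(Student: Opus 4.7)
The plan is to reduce to the orientation-preserving case by passing to an index-at-most-$2$ subgroup and then invoking Theorem \ref{LiftedToralTheorem} in its reduced (orientation-preserving) form. Consider the orientation character $\epsilon\colon G \to \Z/2\Z$. Since $h = [f,g]$ is a commutator, $\epsilon(h) = 0$, and since the Heisenberg group $H$ has abelianization $\Z^2$, $\epsilon$ factors through $\bar G \cong \Z^2$, yielding $\bar{\epsilon}\colon \bar G \to \Z/2\Z$. If $\bar{\epsilon} \equiv 0$ there is nothing to prove; otherwise $\bar G^+ := \ker \bar{\epsilon}$ has index $2$ in $\bar G$.

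Depending on which of $\bar f, \bar g$ are orientation-reversing, $\bar G^+$ is generated by one of the pairs $(\bar f^2, \bar g)$, $(\bar f, \bar g^2)$, or $(\overline{fg}, \bar f^2)$. Using the Heisenberg identity $fgf^{-1} = hg$ together with the centrality of $h$ (for example, $[f^2, g] = f(hg)f^{-1}g^{-1} = h^2$), one checks in each case that the lifts to $\R^2$ of the chosen generating pair have commutator equal to $h^{\pm 2}$, which is nontrivial since $h$ is conjugate to a nontrivial translation. Hence the $\Z^2$-action $\bar G^+$ on $\A$ still fails to lift.

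Next I claim every element of $\bar G^+$ is isotopic to the identity on $\A$. Indeed, suppose $u \in \Homeo(\R^2)$ is orientation-preserving and commutes with $h$ (which we may assume is the horizontal translation by $1$). Then $\bar u$ is an orientation-preserving homeomorphism of $\A$, and the fact that $u$ commutes with the Deck group $\langle h\rangle$ forces the induced action of $\bar u$ on $\pi_1(\A) \cong \Z$ to be trivial. Of the four mapping classes of $\A$, only the identity class is both orientation-preserving and acts trivially on $\pi_1$ (the half-turn $(\theta,y) \mapsto (-\theta,-y)$ is orientation-preserving but inverts $\pi_1$, which is precisely why it admits no lift commuting with $h$). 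So $\bar u$ is isotopic to the identity. In particular the non-intersection element $\bar{e'}$, which is isotopic to the identity by hypothesis, lies in $\bar G^+$, so the non-intersection hypothesis transfers.

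Therefore $\bar G^+$ satisfies all the hypotheses of Theorem \ref{LiftedToralTheorem} in its orientation-preserving form, and applying that form of the theorem yields $\bar e \in \bar G^+ \subseteq \bar G$ conjugate to $\bar{g_0}$; this is exactly the conclusion sought for the original $\bar G$, and justifies the reduction to orientation-preserving generators. I expect the subtlest step to be the topological one identifying elements of $\bar G^+$ as isotopic to the identity on $\A$; the Heisenberg commutator computations and the case analysis for generators of $\bar G^+$ are routine once set up.
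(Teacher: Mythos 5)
Your proposal is correct and follows essentially the same route as the paper: pass to the orientation-preserving subgroup of index at most two, note that its lifts still fail to commute and that the non-intersection element (being isotopic to the identity) lies in this subgroup, and apply the orientation-preserving case of Theorem \ref{LiftedToralTheorem} there. The paper's proof only adds one further observation you omit: the element conjugate to $\bar{g_0}$ produced this way sends every essential circle off itself, hence cannot commute with any end-interchanging homeomorphism of $\A$, so in fact $G$ contains no orientation-reversing elements at all.
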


\begin{proof}
Suppose we have proved Theorem \ref{LiftedToralTheorem} in this case, and suppose $\phi$ satisfies the conditions of the theorem. Then at most an index-two subgroup $G' \subset G$ is orientation-preserving. Since $e \in G'$, we conclude that $G'$ is conjugate to a lifted toral Heisenberg group, so in $\bar{G'}$ there is an element sending every essential circle $c \subset \A$ above itself. Such an element cannot commute with any homeomorphism of $\A$ that interchanges the ends, so in fact $G = G'$.
\end{proof}

\begin{corollary}
The same conclusion holds if $h$ is conjugate to a translation and $\bar{f}$ leaves invariant some circle $c \subset \A$, or some properly embedded line $\bar{\ell} \subset \A$ going from one end of the annulus to the other.
\end{corollary}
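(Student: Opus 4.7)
The strategy is to reduce to Theorem \ref{LiftedToralTheorem} by exhibiting an element $\bar{e'} \in \langle \bar{f}, \bar{g}\rangle$ that is isotopic to the identity and satisfies the non-intersection condition there. Throughout I assume, without loss of generality, that $h = h_0$, so $\A = \R^2/\langle h_0\rangle$.

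First I handle the case that $\bar{f}$ preserves an essential circle $c \subset \A$. The key step is to rule out $\bar{g}(c) = c$. Indeed, if $\bar{g}(c) = c$, then after replacing $f$ and $g$ by appropriate powers of $h$ (which does not disturb $[f,g] = h$ since $h$ is central), both $f$ and $g$ preserve one fixed connected component $\tilde{c}$ of $\pi^{-1}(c)$. Identifying $\tilde{c}$ with $\R$ so that $h|_{\tilde{c}}$ is translation by $1$, the restrictions $f|_{\tilde{c}}$ and $g|_{\tilde{c}}$ are lifts of the commuting circle homeomorphisms $\bar{f}|_c$ and $\bar{g}|_c$, and their commutator is $h|_{\tilde{c}}$, whose translation number equals $1$. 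But amenability of $\langle \bar{f}, \bar{g}\rangle$ supplies a common invariant Borel probability measure on $c$, and the computation reviewed near the start of Section~2 shows that on lifts of measure-preserving orientation-preserving circle homeomorphisms, translation number is a homomorphism, hence vanishes on commutators --- a contradiction. So $\bar{g}(c) \neq c$, and by commutativity $\bar{g}(c)$ is another essential circle that is $\bar{f}$-invariant. If $\bar{g}(c) \cap c = \emptyset$, then $\bar{e'} := \bar{g}$ satisfies the non-intersection condition via $c$, and Theorem \ref{LiftedToralTheorem} concludes the proof. Otherwise, I would analyze the connected components of $\A \setminus (c \cup \bar{g}(c))$, which are permuted by $\bar{f}$; a Schoenflies / innermost-disk argument applied to the bigons, combined with the mean homological translation vector of $\bar{g}$ detected by the invariant measure, should produce a disjoint iterate $\bar{g}^N(c^*) \cap c^* = \emptyset$ for an appropriate essential circle $c^*$, again supplying the required $\bar{e'}$.

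For the case that $\bar{f}$ preserves a properly embedded line $\bar{\ell}$ from one end of $\A$ to the other, the same translation-number argument adapts to show $\bar{g}(\bar{\ell}) \neq \bar{\ell}$: otherwise, choosing a lift $\tilde{\ell}$ with $f(\tilde{\ell}) = \tilde{\ell}$ forces $g(\tilde{\ell}) = h^k \tilde{\ell}$ for some $k \in \Z$, and combining $[f,g] = h$ with $f(\tilde{\ell}) = \tilde{\ell}$ yields $h(\tilde{\ell}) = \tilde{\ell}$, contradicting that $h$ permutes the distinct components of $\pi^{-1}(\bar{\ell})$. Rather than attempting to convert $\bar{\ell}$ into an invariant essential circle --- which need not exist in general, as the model $\bar{f} = \bar{g_0}$ shows --- I would work in the universal cover directly: $f$ preserves the strip $\bar{S}$ bounded by $\tilde{\ell}$ and $h\tilde{\ell}$ in $\R^2$, and applying the prime-ends compactification to $\bar{S}$ and examining the induced action of $\langle f, g\rangle$ on its ends constructs the desired $\bar{e'}$, essentially along the lines of the proof of Theorem \ref{LiftedToralTheorem} adapted to the strip setting. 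The principal obstacle is the subcase in the first case where $\bar{g}(c)$ meets $c$ (and its analogue for $\bar{\ell}$): making the extraction of a disjoint essential circle rigorous is the technically delicate part, requiring a careful combination of Schoenflies, prime ends, and the mean homological translation vector. By contrast, the algebraic translation-number obstructions that rule out the invariant set being simultaneously preserved by $\bar{f}$ and $\bar{g}$ are the clean conceptual core of the argument.
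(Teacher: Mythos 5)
Your overall reduction (produce an element of $\langle \bar{f}, \bar{g}\rangle$ with the non-intersection property and invoke Theorem \ref{LiftedToralTheorem}) is the same as the paper's, and your translation-number arguments ruling out $\bar{g}(c) = c$ and $\bar{g}(\bar{\ell}) = \bar{\ell}$ are fine. But in both cases the step that actually carries the proof is left as a sketch, and in the circle case your sketch is aimed at a subcase that cannot occur. The paper shows directly that if $\bar{f}(c) = c$ then $\bar{g}(c) \cap c = \emptyset$, full stop: let $\ell = \pi^{-1}(c)$, an $h$-invariant line preserved by $f$. If $\bar{g}(c)$ met $c$, there would be $x \in \ell$ with $g(x) \in \ell$; since $g$ commutes with $h$ and $\ell/h$ is compact, $g$ displaces points of $\ell$ by at most some $M$, and since $f^n$ preserves $\ell$ and commutes with $h$, the points $f^n(x)$ and $f^n(g(x))$ stay within a distance $N$ independent of $n$, while $d(g(f^n(x)), f^n(x)) < M$. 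This makes $d(f^n(g(x)), g(f^n(x)))$ bounded, contradicting $f^n g = h^n g f^n$. So the ``technically delicate'' bigon/rotation-vector analysis you defer is unnecessary, and as written your proposal does not close that case: ``should produce a disjoint iterate'' is a plan, not an argument, and it is the whole content of the corollary in that branch.

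The line case has the same problem. After ruling out $\bar{g}(\bar{\ell}) = \bar{\ell}$ (and disjointness, which you do not address but which the paper also excludes), the real work is to extract a non-intersection element from the fact that $\bar{g}(\bar{\ell})$ crosses $\bar{\ell}$. The paper does this concretely: an intersection point $\bar{x} \in \bar{\ell} \cap \bar{g}(\bar{\ell})$ is sent by $\bar{f}$ to another intersection point, $\bar{f}(\bar{x}) \neq \bar{x}$ (using $fg(x) = hgf(x)$ and the $f$-invariant complementary domains of the lifts of $\bar{\ell}$), the orbit cannot accumulate, so $\bar{f}|_{\bar{\ell}}$ is conjugate to a translation; one then builds an essential circle from an arc of $\bar{g}(\bar{\ell})$ between consecutive intersections and an arc of $\bar{\ell}$, observes that $\bar{f}^2$ moves this circle off itself, and concludes via Lemma \ref{IntersectionProperty} that $\bar{f}$ itself has the non-intersection property. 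Your alternative --- prime ends on the strip between $\tilde{\ell}$ and $h\tilde{\ell}$, ``essentially along the lines of the proof of Theorem \ref{LiftedToralTheorem}'' --- is again only a pointer; note also that the strip between $\tilde{\ell}$ and $h\tilde{\ell}$ is not $f$-invariant in any obvious sense unless you first establish something like the translation behaviour of $f$ on the lifts of $\bar{\ell}$, which is exactly the missing content. So the proposal identifies the right target (a non-intersection element) but does not prove either case.
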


\begin{proof}
First assume that $\bar{f}$ leaves invariant a circle $c \subset \A$. We claim that $\bar{g}$ must have the non-intersection property. Let $\ell \subset \R^2$ be the preimage under $\pi \colon \R^2 \to \A$ of $c$; it is an $h$-invariant line. Choose lifts $f, g$. Suppose, by way of contradiction, that $c \cap \bar{g}(c) \neq \emptyset$. Then there exists $x \in \ell$ such that $g(x) \in \ell$.

Notice the following: if $I \subset \ell$ denotes the closed subinterval from $x$ to $h(x)$, there exists $M > 0$ such that $d(y, g(y)) < M$ for all $y \in I$, since $g$ is continuous. In fact, since $g$ commutes with $h$, this holds for all $y \in \ell$.

Now $g(x)$ is within $M$ of $x$, so (because $\ell$ is $h$-invariant and $f^n$ commutes with $h$) there is a number $N$ independent of $n$ such that $d(f^n(g(x)), f^n(x)) < N$. Also, $d(g(f^n(x)), f^n(x)) < M$. It follows that $d(f^n(g(x)), g(f^n(x))) < M + N$ independent of $n$. But this contradicts the fact that $f^n(g(x)) = h^n(g(f^n(x)))$.

Now suppose $\bar{f}$ leaves invariant a line $\bar{\ell} \subset \A$ that goes from one end of the annulus to the other. We will show that $\bar{f}$ has the non-intersection property. We claim that $\bar{g}(\bar{\ell})$ must intersect $\bar{\ell}$ (but not equal $\bar{\ell}$). We use this to show that $\bar{f}|_{\bar{\ell}}$ is conjugate to a translation. If it were not, then an intersection point $\bar{x} \in \bar{\ell} \cap \bar{g}(\bar{\ell})$ would have to be fixed, and we show this is impossible.

If $\bar{g}(\bar{\ell})$ were equal or disjoint from $\bar{\ell}$, in the universal cover $\R^2$ we would get $f$-invariant regions which are either invariant or translated by $g$, contradicting the assumption that their commutator is $h$.

Let $\bar{x} \in \bar{\ell} \cap \bar{g}(\bar{\ell})$. Notice that $\bar{f}(\bar{x}) \in \bar{\ell}$, and $\bar{f}(\bar{x}) \in \bar{f}(\bar{g}(\bar{\ell})) = \bar{g}(\bar{f}(\bar{\ell})) = \bar{g}(\bar{\ell})$, so $\bar{f}(\bar{x})$ is also an intersection point of $\bar{\ell}$ and $\bar{g}(\bar{\ell})$. We claim that $\bar{f}(\bar{x}) \neq \bar{x}$. Suppose otherwise. Let $x$ be a lift of $\bar{x}$ to $\R^2$, and let $\ell$ be the lift of $\bar{\ell}$ corresponding to $x$. Let $f$ be the lift of $\bar{f}$ leaving $\ell$ invariant; notice that $f$ leaves invariant all the lifts of $\ell$ and the domains between these lifts. Now $fg(x) = hgf(x) = hg(x)$; thus $f$ moves $g(x)$ one unit to the right, but this means it sends it into another complementary domain, a contradiction.

Thus the intersection point $\bar{x}$ is sent to a different intersection point $\bar{f}(\bar{x})$. If the $\bar{f}^n(\bar{x})$ accumulated on anything, that would have to be a fixed intersection point. It follows that $\bar{f}|_{\bar{\ell}}$ is conjugate to a translation.

Then $\bar{f}|_{\bar{g}(\bar{\ell})}$ is also conjugate to a translation. We can make an essential circle in the annulus by taking a segment of $\bar{g}(\bar{\ell})$ starting at $\bar{x}$ and going up to its next intersection with $\bar{\ell}$, then going back down to $\bar{x}$ along $\bar{\ell}$. Applying $\bar{f}$ twice will send this circle to another disjoint from it, so $\bar{f}^2$ does not have the intersection property, and then neither does $\bar{f}$ (see Lemma \ref{IntersectionProperty} below).
\end{proof}

Suppose $\bar{e}' \in \bar{G}$ has the non-intersection property, as in the statement of the theorem. One might expect that $\bar{e}'$ itself would be conjugate to $\bar{g_0}$. The next two examples illustrate that this is not the case: in both, $\bar{f}$ has the non-intersection property but $f$ is not conjugate to a translation.

\begin{example}
Let $f(x, y) = (x + y, y + l(y)),$ where $l(y) = \frac{\sin(\pi(2y + 1/2)) + 1}{8},$ and $g(x, y) = g_0.$ The commutator of $f$ and $g$ is $h_0$, with which both $f$ and $g$ commute. Moreover, $f$ sends the $x$-axis above itself, but as we iterate it the images are bounded above by $y = 1/2.$
\end{example}

As the next example illustrates, the lines $f^n(x$-axis) may not be bounded above but still have accumulation.

\begin{example}
Let $f(x, y) = (x + y, y + k(x + y) + l(y)),$ where $l$ is as above and $k(x + y) = \sin(4\pi(x + y)) + 1.$ As before, let and $g(x, y) = g_0.$ Again, their commutator is $h_0$, with which they both commute. Now $$f(x, 0) = (x, k(x) + l(0)) = (x, \sin(4\pi x) + 1 + 1/4),$$ so $f$ sends the $x$-axis above itself. The $x$-axis is not moved up to infinity under iteration of $f.$ Indeed, $f^n(3/8, 1/2) = (3/8 + n/2, 1/2).$

However, neither are the images bounded above.

\begin{proof}
Suppose otherwise; suppose that there is $c$ such that $f^n(x$-axis) lies below $y = c$ for all $n.$ Let $\pi_2(p)$ denote the $y$-coordinate of $p$; note that $\pi_2(f(p)) \geq \pi_2(p)$ for all $p.$

Taking the quotient by $(x, y) \mapsto (x + 1/2, y),$ we get an induced diffeomorphism $\bar{f}$ of the cylinder. It has countably many fixed points, $\bar{p_i} = (\bar{3/8}, 1/2 + i).$ For $p \in x$-axis, since $\pi_2(f^n(p)) < c$ for all $n,$ it must be the case that for each $\epsilon > 0,$ there exists $N$ large enough so that $\pi_2(f^{n + 1}(p)) - \pi_2(f^n(p)) < \epsilon$ for all $n \geq N.$ It is possible to choose $\epsilon$ small enough so that the set of points $\bar{x}$ on the cylinder such that $\pi_2(\bar{f}(\bar{x})) - \pi_2(\bar{x}) < \epsilon$ consists of small neighborhoods $N_i$ around the fixed points $\bar{p_i}.$ Therefore, we have $\bar{f}^n(\bar{p}) \to \bar{p_i}$ for some $i$ as $n \to \infty.$

The derivative at these fixed points is $\begin{pmatrix} 1 & 1 \\0 & 1 \\ \end{pmatrix}.$ Without loss of generality, suppose that $\bar{f}^n(\bar{p}) \to \bar{p_0}.$ For any point $\bar{x}$, let $\theta(\bar{x}) = \arctan(\frac{\pi_2(\bar{x}) - 1/2}{\pi_1(\bar{x}) - 3/8})$, where $\pi_1(\bar{x})$ means the first coordinate of the lift of $\bar{x}$ of minimum distance to $(3/8, 1/2)$. For large enough $N,$ for $n \geq N,$ $\bar{f}^n(\bar{p})$ will stay close enough to $\bar{p_0}$ that the action of $\bar{f}$ on $\theta(\bar{f}^n(\bar{p}))$ will be very close to what the derivative $\begin{pmatrix} 1 & 1 \\0 & 1 \\ \end{pmatrix}$ would do. Therefore, for some even larger $n,$ $\theta$ becomes positive, which means that $\pi_1(\bar{f}^n(\bar{p})) < 3/8.$ But as long as we stay close enough to $\bar{p_0}$, $\pi_1(\bar{f}^n(\bar{p}))$ only decreases further as we increase $n$, so it will never get back to $3/8,$ which means $\bar{f}^n(\bar{p}) \not\to \bar{p_0}$, a contradiction.
\end{proof}
\end{example}

Before we begin proving Theorem \ref{LiftedToralTheorem}, we need a tool called Carath\'{e}odory's \emph{prime end theory}. A good exposition can be found in \cite{Mather}; see also \cite{Parkhe}.

Let $S$ be a connected surface without boundary, and $U \subset S$ a subset which is open, connected, relatively compact, and ``homologically finite'': $H_1(U, \R)$ is finite-dimensional. Naturally associated to $U$, there is a compact surface with boundary $\hat{U}$ and a topological embedding $\omega\colon U \to \hat{U}$ such that $\omega(U)$ is dense in $\hat{U}$. We will identify $U$ with its image under $\omega$. The points in $\hat{U}$ are called \emph{prime points}, the points in $\hat{U} \setminus U$ are called \emph{prime ends}, and $\hat{U}$ is called the \emph{prime end compactification} of $U$. Roughly speaking, in places where $U \subset S$ has a one-point hole, $\hat{U}$ fills in that hole, and in places where $U \subset S$ has a hole with more than one point removed, $\hat{U}$ adds a boundary circle to $U$.

\begin{proposition}[\cite{Parkhe}, Corollary 5.17]
If $U \subset \A$ is homotopic to an essential circle $c \subset \A$, then $\hat{U}$ is homeomorphic to the closed annulus $S^1 \times [0, 1]$.
\end{proposition}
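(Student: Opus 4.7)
The plan is to identify $\hat{U}$ by computing its topology from that of $U$: I want to show $\hat{U}$ is a compact, connected, orientable surface with boundary, having $\pi_1 \cong \mathbb{Z}$ and exactly two boundary circles, and then invoke the classification of compact surfaces with boundary to conclude $\hat{U} \cong S^1 \times [0,1]$.

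First I would pin down the complement $\A \setminus U$. Reading the hypothesis that $U$ is homotopic to the essential circle $c$ as saying that $U$ has the homotopy type of $S^1$ and that its generator maps to the essential class in $\pi_1(\A)$, we get $\pi_1(U) \cong \mathbb{Z}$. Since $U$ is relatively compact, $U \subset S^1 \times (-M, M)$ for some $M > 0$, so $\A \setminus U$ contains both ends of $\A$ and has at least two components; let $V_+$ and $V_-$ be the components containing the upper and lower ends respectively. I would then argue these are the only components by noting that any additional (necessarily bounded) component $W$ of $\A \setminus U$ would yield, via a small loop in $U$ enclosing $W$, a class in $H_1(U;\mathbb{Z})$ linearly independent of the essential-loop class, contradicting $H_1(U;\mathbb{Z}) \cong \mathbb{Z}$.

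Next I would apply prime end theory. By the structural facts already quoted in the excerpt, $\hat{U}$ is a compact surface with boundary into which $U$ embeds as a dense open subset. In the two-complementary-component picture just established, each of the nondegenerate continua $V_+, V_-$ contributes exactly one circle of prime ends to $\hat{U}$, so $\partial \hat{U}$ is a disjoint union of two circles. Furthermore, $\hat{U}$ inherits orientability from its dense open subset $U \subset \A$; and since $U = \Int(\hat{U})$ is a deformation retract of $\hat{U}$ through a collar of $\partial \hat{U}$, one has $\pi_1(\hat{U}) \cong \pi_1(U) \cong \mathbb{Z}$. By the classification of compact orientable surfaces with boundary, a genus-$g$ surface with $b$ boundary components has $\pi_1$ free of rank $2g + b - 1$; setting $b = 2$ and the rank equal to $1$ forces $g = 0$, so $\hat{U} \cong S^1 \times [0,1]$.

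The step I expect to require the most care is the claim that each of $V_+, V_-$ contributes exactly one boundary circle of $\hat{U}$ and that no additional prime ends arise in the interior. This is where a nontrivial piece of prime end theory has to be invoked, via the references in the excerpt; once that is granted, the rest reduces to standard bookkeeping with fundamental groups and the surface classification theorem. A subsidiary issue is making precise the phrase ``$U$ is homotopic to an essential circle'' in the hypothesis: the natural interpretation (that the inclusion $U \hookrightarrow \A$ is, up to homotopy, the inclusion of an essential loop) is exactly what makes the rank argument in the first step go through, and I would adopt it without further comment.
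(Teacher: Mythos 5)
First, a point of comparison: the paper does not prove this proposition at all --- it is quoted from the author's thesis (\cite{Parkhe}, Corollary 5.17), so there is no in-paper argument to measure you against. Your route (count the complementary components of $U$, let prime end theory attach one boundary circle per nondegenerate complementary continuum, then finish with the classification of compact orientable surfaces with boundary) is the natural one and is almost certainly parallel in spirit to the cited source. The bookkeeping at the end is fine: interior inclusion is a homotopy equivalence by the collar theorem, orientability passes to $\hat{U}$, and rank $2g+b-1=1$ with $b=2$ forces the annulus.

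There is, however, one step you assert rather than prove, and it is exactly the place where the essentialness hypothesis has to be used: that the upper and lower ends of $\A$ lie in \emph{distinct} components of $\A\setminus U$. Your component count does not give this. Indeed, by Alexander duality (which is also the clean way to replace your ``small loop enclosing $W$'' construction, nontrivial to carry out directly when $W$ is wild): $\mathrm{rank}\, H_1(U) = \#\{\text{components of } S^2\setminus U\} - 1$, where $S^2$ is the end compactification of $\A$. With rank $1$ you get exactly two complementary components, but this is also consistent with one component containing \emph{both} ends together with one bounded component (e.g.\ an inessential annular $U$ encircling a disk realizes this), so ``contains both ends, hence at least two components with $V_+\neq V_-$'' is not automatic. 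The fix: if both ends lay in a single complementary component $C$, then $C\cup\{p_+,p_-\}$ is a compact connected subset of $S^2$, the components of its complement are simply connected, and $U$ sits inside that complement, so every loop of $U$ would be null-homotopic in $\A$ --- contradicting that $U$ carries an essential class. With that inserted, and with relative compactness guaranteeing both complementary continua are nondegenerate (so neither ``hole'' gets filled by a single prime point, which is what would produce a disk instead of an annulus), your argument goes through, modulo the prime-end structure facts you correctly flag as the external input.
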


Let $h\colon \R^2 \to \R^2$ be a translation. Given $U \subset \R^2$ which is simply connected, $h$-invariant, and bounded above and below, we can define the prime end ``compactification'' of $U$, even though $U$ is not relatively compact in the plane. Namely, the image $\bar{U} \subset \A$ is relatively compact; we can form its prime end compactification $\hat{\bar{U}}$, which by the proposition is homeomorphic to $S^1 \times [0, 1]$. We can then consider the universal cover $\tilde{\hat{\bar{U}}}$, which is homeomorphic to $\R \times [0, 1]$, and call it the prime end compactification of $U$.

\begin{proposition}[\cite{Parkhe}, Proposition 5.19]
Given a homeomorphism $f\colon S \to S$ such that $f(U) = U$ (where $U$ as above is open, connected, relatively compact, and homologically finite), there is a unique map $\hat{f}\colon \hat{U} \to \hat{U}$ such that $\hat{f}|_U = f$.
\end{proposition}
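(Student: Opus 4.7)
The plan is to prove existence and uniqueness separately, with uniqueness being nearly trivial and existence reducing to the functoriality of the prime end construction under homeomorphisms that preserve the domain.

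\medskip

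For uniqueness, recall that $U$ is dense in $\hat{U}$. Any two continuous extensions of $f|_U$ to $\hat{U}$ must agree on $U$, hence on its closure $\hat{U}$, so $\hat{f}$ is determined (if it exists) by the requirement $\hat{f}|_U = f$.

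\medskip

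For existence, I would work from the construction of $\hat{U}$ via crosscuts (the exposition in \cite{Mather} gives this in detail). A \emph{crosscut} of $U$ is an embedded arc in $\bar{U}$ whose interior lies in $U$ and whose endpoints lie in $\partial U$. A \emph{fundamental chain of crosscuts} is a sequence $(C_n)$ of pairwise disjoint crosscuts such that the diameters (in a metric on $\bar{U}$) tend to zero and such that $C_{n+1}$ separates $C_n$ from $C_{n+m}$ for $m$ large. Prime ends are equivalence classes of such chains. Because $f\colon S \to S$ is a homeomorphism sending $U$ to $U$, it sends $\bar{U}$ to $\bar{U}$ and $\partial U$ to $\partial U$, so it sends crosscuts of $U$ to crosscuts of $U$. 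Uniform continuity of $f$ on compact subsets of $S$ (in a chosen metric) ensures that $f$ sends fundamental chains to fundamental chains and preserves the equivalence relation, so $f$ induces a bijection on prime ends. Combining this with $f|_U$ defines a bijection $\hat{f}\colon \hat{U}\to\hat{U}$.

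\medskip

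It then remains to check that $\hat{f}$ is continuous (and hence, by the same argument applied to $f^{-1}$, a homeomorphism). Continuity at points of $U$ is immediate from continuity of $f$ on $S$ and the fact that the subspace topology on $U \subset \hat{U}$ agrees with the topology on $U \subset S$. The heart of the matter — and the step I expect to be the main obstacle — is continuity at prime ends. The topology on $\hat{U}$ is generated at a prime end $p$ by neighborhoods of the form $V_C \cup \{\text{prime ends represented by chains eventually inside } V_C\}$, where $C$ is a crosscut representing $p$ and $V_C$ is the component of $U \setminus C$ determined by the chain. Given such a basic neighborhood of $\hat{f}(p)$ defined by a crosscut $C'$, one uses that $f^{-1}(C')$ is itself a crosscut representing $p$ and that $f$ maps the component of $U\setminus f^{-1}(C')$ on the $p$-side onto the component of $U \setminus C'$ on the $\hat{f}(p)$-side; this provides the required basic neighborhood of $p$ mapping into the chosen neighborhood of $\hat{f}(p)$. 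This step is slightly delicate because one must verify that the image under $f$ of a fundamental chain equivalent to $p$ really gives a fundamental chain equivalent to $\hat{f}(p)$ in the sense of the topology, but it is a routine consequence of the homeomorphism property of $f$ on $\bar{U}$.
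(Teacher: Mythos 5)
The paper does not prove this proposition at all; it is imported as a black box from the author's thesis \cite{Parkhe} (with \cite{Mather} as the background reference), so there is no in-paper argument to compare against. Your outline is the standard Carath\'eodory-style proof and it is essentially correct: uniqueness follows from density of $U$ in $\hat{U}$ together with the fact that $\hat{U}$ is Hausdorff (it is a compact surface with boundary) --- note this presumes, as the statement implicitly does, that ``map'' means \emph{continuous} map, since uniqueness fails for arbitrary set-theoretic extensions; existence follows because $f(U)=U$ forces $f(\cl U)=\cl U$ and $f(\partial U)=\partial U$, so $f$ carries crosscuts to crosscuts, and the defining properties of fundamental chains (disjointness, the separation condition, and the equivalence relation) are topological and hence preserved, while the diameter condition is handled by uniform continuity of $f$ on the compact set $\cl U$. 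Your continuity-at-prime-ends step, pulling back a crosscut neighborhood of $\hat{f}(p)$ to the crosscut neighborhood determined by $f^{-1}(C')$, is exactly the right mechanism. Two small caveats: in Mather's exposition the prime ends and the topology on $\hat{U}$ are defined purely topologically (no metric and no diameter condition), so if you follow that reference literally the uniform-continuity step is replaced by the observation that the whole construction is functorial under homeomorphisms preserving $U$; and since $U$ here need not be simply connected (only relatively compact and homologically finite), the crosscut analysis has to be carried out near each boundary circle of $\hat{U}$ separately --- this is where the homological finiteness enters, though it does not affect the extension argument itself.
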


\begin{corollary}
The operation $f \mapsto \hat{f}$ respects group structure. That is, if two maps $f, g\colon S \to S$ both leave $U$ invariant, then $\widehat{f \circ g} = \hat{f} \circ \hat{g}$.
\end{corollary}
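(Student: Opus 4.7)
The plan is to derive this corollary directly from the uniqueness clause in the preceding proposition. Both $\widehat{f\circ g}$ and $\hat{f}\circ \hat{g}$ will be shown to be continuous self-maps of $\hat{U}$ that restrict to $f\circ g$ on the dense subset $U\subset\hat{U}$; the preceding proposition then forces them to coincide.

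First I would observe that $f\circ g$ leaves $U$ invariant, since $g(U)=U$ and $f(U)=U$. Applying the preceding proposition to $f\circ g$, there exists a unique continuous extension $\widehat{f\circ g}\colon \hat{U}\to \hat{U}$ with $\widehat{f\circ g}|_U = f\circ g$. Next I would argue that $\hat{f}\circ \hat{g}$ is also such an extension. The composition makes sense because $\hat{g}\colon \hat{U}\to\hat{U}$ and $\hat{f}\colon\hat{U}\to\hat{U}$, and it is continuous as a composition of continuous maps. To see the restriction to $U$ is $f\circ g$, take $x\in U$: then $\hat{g}(x)=g(x)\in U$ (using $\hat{g}|_U=g$ and invariance $g(U)=U$), so $\hat{f}(\hat{g}(x))=\hat{f}(g(x))=f(g(x))$, using $\hat{f}|_U=f$ on the point $g(x)\in U$.

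Finally I would invoke the uniqueness part of the preceding proposition: any two continuous maps $\hat{U}\to\hat{U}$ agreeing on the dense subset $U$ are equal. Therefore $\widehat{f\circ g}=\hat{f}\circ\hat{g}$.

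There is no real obstacle here; the argument is essentially a tautology once one checks that invariance of $U$ under $g$ implies $\hat{g}$ sends the dense subset $U$ into $U$ (not merely into $\hat{U}$), which is what allows the restriction computation $\hat{f}(\hat{g}(x)) = f(g(x))$ to go through. The only subtle point worth flagging is that $\hat{g}$ need not map $U$ \emph{onto} $U$ a priori from the statement of the previous proposition, but since the hypotheses apply equally to $g^{-1}$ we in fact have $\hat{g}(U)=U$, so the same argument shows $\hat{g}$ is a homeomorphism of $\hat{U}$; this remark also yields, as a bonus, that $\widehat{f^{-1}} = \hat{f}^{-1}$, so $f\mapsto \hat{f}$ is a genuine group homomorphism from the group of homeomorphisms of $S$ preserving $U$ into $\Homeo(\hat{U})$.
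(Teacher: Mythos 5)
Your proposal is correct and follows essentially the same route as the paper: both arguments simply note that $\hat f\circ\hat g$ restricts to $f\circ g$ on the dense subset $U$ and then invoke the uniqueness clause of the extension proposition. The extra verifications you spell out (invariance of $U$ under $f\circ g$, the computation $\hat f(\hat g(x))=f(g(x))$ for $x\in U$) are just the details the paper leaves implicit.
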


\begin{proof}
This follows from the ``uniqueness'' part of the proposition. Since there is only one extension of $(f \circ g)|_U$ to $\hat{U}$, it must be $\hat{f} \circ \hat{g}$.
\end{proof}

To help us prove Theorem \ref{LiftedToralTheorem}, we need the following lemmas.

\begin{lemma}
\label{InvariantDomain}
Let $V$ be an $f$- and $g$-invariant nonempty simply-connected domain in $\R^2$. Then $V = \R^2.$
\end{lemma}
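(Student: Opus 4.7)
The strategy is to project $V$ down to the annulus $\A = \R^2/h$, apply the annular prime-end machinery to the essentially embedded quotient $\bar V = V/h$, and derive a contradiction via a translation-number computation on a boundary circle.

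First, since $V$ is $f$- and $g$-invariant and $h = [f,g]$, it is also $h$-invariant. Because $V$ is simply connected and $h$ acts freely and properly discontinuously on it (as a nontrivial translation of $\R^2$), $V$ is the universal cover of $\bar V$, so $\pi_1(\bar V) \cong \Z$ is generated by $h$ and the inclusion $\bar V \hookrightarrow \A$ is $\pi_1$-surjective---that is, $\bar V$ is an essentially embedded open annular region in $\A$. The projections $\bar f, \bar g \in \Homeo(\A)$ commute (since $\bar h = id$) and preserve $\bar V$. I assume for contradiction that $V \neq \R^2$, so $\bar V \subsetneq \A$.

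In the main case, where $V$ is bounded above and below in $\R^2$ (equivalently, $\bar V$ is relatively compact in $\A$), I will apply the prime-end compactification to obtain $\hat{\bar V} \cong S^1 \times [0,1]$, with $\bar f, \bar g$ extending uniquely to commuting homeomorphisms $\hat{\bar f}, \hat{\bar g}$. Lifting to the universal cover $\hat V \cong \R \times [0,1]$ and choosing the lifts $\hat f, \hat g$ that extend $f|_V, g|_V$, I get $[\hat f, \hat g] = h$ by uniqueness of extension; on each boundary line $L = \R \times \{i\}$ this commutator is horizontal unit translation $T_1$ (after, without loss of generality, conjugating $h$ to $(x,y) \mapsto (x+1, y)$). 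Writing $F = \hat f|_L$, $G = \hat g|_L$, the quotients $\bar F, \bar G$ on $S^1 = \R/T_1$ commute, so the abelian (hence amenable) group $\langle \bar F, \bar G\rangle$ admits an invariant Borel probability measure $\mu$. By the argument reviewed in Section 2.1, the translation number is then a homomorphism on the group of $\mu$-preserving lifts, forcing $\tau([F,G]) = 0$; but $\tau(T_1) = 1$, a contradiction.

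It remains to handle the case where $\bar V$ is not relatively compact in $\A$, i.e., $V$ is unbounded in the $y$-direction. If $V$ is bounded on one side (say $\inf_V y > -\infty$), a one-sided analogue of the prime-end construction produces a single boundary line on which the same translation-number argument runs verbatim. If $V$ is unbounded on both sides, then because $V$ is simply connected, $S^2 \setminus V$ is connected by Alexander duality, and combined with $h$-invariance of the complement I would argue that any nonempty $\R^2 \setminus V$ necessarily contains an unbounded connected $h$-invariant subset separating $\R^2$---contradicting the connectedness of the domain $V$, so that $V = \R^2$ directly in this case. The main obstacle I anticipate is precisely this last topological step: ensuring the case analysis for non-relatively-compact $\bar V$ is exhaustive, so that the translation-number dichotomy covers all possibilities.
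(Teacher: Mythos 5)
Your main case (and the one-sided case) is essentially the paper's argument: the paper takes prime ends of the upper or lower frontier, gets induced maps $\hat f,\hat g$ on a boundary line with $[\hat f,\hat g]=\hat h$ a unit translation, and concludes via Theorem \ref{WhichM} applied to the circle $\ell/\hat h$ --- which is exactly your invariant-measure/translation-number contradiction, just quoted as a black box. The genuine problem is your final case, where $V$ is unbounded both above and below. The topological claim you invoke --- that a nonempty closed $h$-invariant complement of a simply connected $h$-invariant domain must contain an unbounded connected $h$-invariant subset separating $\R^2$ --- is false, so this case cannot be dismissed. A counterexample: take $h(x,y)=(x+1,y)$ and $V=\R^2\setminus\bigcup_{n\in\Z}\bigl(\{n\}\times[0,\infty)\bigr)$. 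This $V$ is open, connected, simply connected (its complement in $S^2$ is connected, since all the rays meet at $\infty$), $h$-invariant, unbounded in both $y$-directions, and yet $V\neq\R^2$; its complement has no connected $h$-invariant subset beyond the empty set, and it does not separate the plane. So the ``unbounded on both sides'' case is not vacuous on purely topological grounds, and your argument for it collapses; whether such a $V$ can also be $f$- and $g$-invariant is precisely what still has to be ruled out dynamically.

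To close the gap you must run the prime-end/rotation-number argument in that case too, not avoid it: since $\R^2\setminus V$ is nonempty, closed and $h$-invariant, its image in $\A$ together with (at least) one end of the annulus gives a nondegenerate ``side'' of $\bar V$ in the end-compactification, and Carath\'eodory's theory still produces a circle of prime ends there, with a line of prime ends upstairs on which $\hat h$ acts and on which $\hat f,\hat g$ act as non-commuting lifts of commuting orientation-preserving circle homeomorphisms; the contradiction with Theorem \ref{WhichM} (equivalently, your translation-number computation) then applies. This is what the paper's proof does in one stroke by saying ``$V$ has an upper or lower frontier; taking prime ends, we get a boundary line,'' without ever assuming $\bar V$ is relatively compact. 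As written, your proposal proves the lemma only under the additional hypothesis that $V$ is bounded in at least one $y$-direction, which is not enough for the applications in Section 3 (there the lemma is applied to domains containing a half-plane, and one cannot exclude a priori that the complement consists of arcs escaping to the other end).
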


\begin{proof}
Suppose otherwise. $V$ has an upper or lower frontier (or both); suppose it has an upper frontier, without loss of generality. Taking prime ends, we get an upper boundary line $\ell$. We get induced homeomorphisms $\hat{f}$ and $\hat{g}$ on the space of prime points, whose restriction to the upper boundary line gives us an action of the Heisenberg group on this line with commutator $\hat{h}$ horizontal translation by 1. Notice that $\hat{f}$ and $\hat{g}$ are non-commuting lifts of commuting homeomorphisms homotopic to the identity of the circle $\ell/\hat{h}$, which is impossible by Theorem \ref{WhichM}.
\end{proof}

The following is Lemma 9 of  \cite{Wang}.

\begin{lemma}
\label{IntersectionProperty}
If some homeomorphism $\bar{f}\colon \A \to \A$ has the intersection property, then so does $\bar{f}^n$ for all $n \in \Z$.
\end{lemma}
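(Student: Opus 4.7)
The plan is to prove the contrapositive: if $\bar f^n(c)\cap c=\emptyset$ for some nonzero integer $n$ and some essential circle $c$, then $\bar f$ itself fails to have the intersection property. Since $\bar f^{-n}(c)\cap c=\emptyset$ if and only if $c\cap\bar f^n(c)=\emptyset$ (apply $\bar f^n$), we may assume $n>0$; let $n$ be the smallest positive integer for which such a $c$ exists. If $n=1$ we are done, so assume $n\geq 2$, and after possibly swapping the two ends of $\A$ we may assume $\bar f^n(c)$ lies above $c$. Minimality of $n$ forces $\bar f^k(c)\cap c\neq\emptyset$ for every $1\leq k\leq n-1$, and applying $\bar f^k$ to those intersections yields $\bar f^k(c)\cap\bar f^n(c)\neq\emptyset$ as well.

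I would then set $K=\bigcup_{i=0}^{n-1}\bar f^i(c)$, a compact connected subset of $\A$, and let $U$ be the connected component of $\A\setminus K$ containing a neighborhood of the upper end of $\A$; as a connected essential open subsurface of $\A$, $U$ is itself an open annulus. The central intermediate claim is that $\bar f(U)\subsetneq U$. For the containment: $\bar f(U)$ is open and connected, is disjoint from $\bar f(K)=\bar f(c)\cup\cdots\cup\bar f^n(c)$, and in particular from each of $\bar f(c),\dots,\bar f^{n-1}(c)$; it is also contained in the upper component of $\A\setminus\bar f^n(c)$, which is disjoint from $c$ because $\bar f^n(c)$ lies strictly above $c$. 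Hence $\bar f(U)$ meets no component of $K$, and by connectivity (plus containing the upper end) $\bar f(U)\subset U$. Strictness follows because $\bar f(U)$ is bounded below by arcs of $\bar f^n(c)$, which lie strictly above $c$, while $U$ itself still extends down to arcs of $c$.

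With $\bar f(U)$ strictly contained in $U$, the plan is to extract an essential circle $c'$ from the collar $U\setminus\overline{\bar f(U)}$ separating $\overline{\bar f(U)}$ from the lower end of $\A$. Such a $c'$ will satisfy $\bar f(c')\subset\bar f(U)\subset\overline{\bar f(U)}$, which is disjoint from $c'$, giving exactly the desired violation of the intersection property for $\bar f$. The main obstacle I expect is precisely this last step, because the frontiers $\partial U$ and $\bar f(\partial U)$ both lie in $\bar f(c)\cup\cdots\cup\bar f^{n-1}(c)$ and could touch, pinching the collar and leaving no room for an essential circle of the right homotopy class. To sidestep this I would iterate and work with the strictly descending chain $U\supsetneq\bar f(U)\supsetneq\bar f^2(U)\supsetneq\cdots$ of annular neighborhoods of the upper end: for some index $k$ the open layer $\bar f^{k-1}(U)\setminus\overline{\bar f^k(U)}$ must contain an essential circle $c'$, and then $\bar f(c')\subset\bar f^k(U)\setminus\overline{\bar f^{k+1}(U)}$ sits in the next layer down and is disjoint from $c'$ by the strict nesting.
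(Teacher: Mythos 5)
Your reduction to the contrapositive and the construction of $K=\bigcup_{i=0}^{n-1}\bar f^i(c)$ and of the upper complementary component $U$ with $\bar f(U)\subset U$ are fine (though the justification that $U$ is an annulus should go through the end compactification of $\A$: $U$ is a complementary component of a compact connected set, hence simply connected after adding the upper end; ``connected essential open subsurface'' alone does not give an annulus, and the strictness of $\bar f(U)\subsetneq U$ as you argue it is also not airtight). The decisive problem is the last step, which is exactly the heart of the lemma. You correctly identify the pinching obstacle --- the frontier of $U$ (inside $K$) and the frontier of $\bar f(U)$ (inside $\bar f(K)$) may meet, so the collar $U\setminus\overline{\bar f(U)}$ need not contain an essential circle --- but your proposed fix by passing to deeper layers is vacuous: $L_k:=\bar f^{k-1}(U)\setminus\overline{\bar f^{k}(U)}=\bar f^{k-1}\bigl(U\setminus\overline{\bar f(U)}\bigr)$, and $\bar f^{k-1}$ is a homeomorphism of $\A$ isotopic to the identity, so it carries essential circles to essential circles and inessential ones to inessential ones. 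Hence some layer contains an essential circle if and only if the first one does, and no argument is given for the first layer. Indeed, an essential circle in $L_1$ forces the connected sets $\overline{\bar f(U)}$ and $\partial U\subset K$ to lie on opposite sides of it, i.e.\ $\overline{\bar f(U)}\cap\partial U=\emptyset$, which is precisely the pinching condition you were trying to avoid; iterating changes nothing because $\overline{\bar f^k(U)}\cap\partial\bar f^{k-1}(U)=\bar f^{k-1}\bigl(\overline{\bar f(U)}\cap\partial U\bigr)$. So the crux remains unproven.

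For comparison, the argument the paper relies on (Lemma 9 of Wang) sidesteps pinching by never working with complementary components of a union of curves: assuming $\bar f^q(c)\cap c=\emptyset$, one picks disjoint essential circles $c_0<c_1<\cdots<c_{q-1}<\bar f^q(c_0)$ and forms the lower envelope $c'=\bigvee_{i=0}^{q-1}\bar f^{q-i}(c_i)$, where $c\vee c'$ denotes the boundary of $B(c)\cap B(c')$ (the intersection of the regions below); by Ker\'ekj\'art\'o's theorem on intersections of Jordan domains this is again an essential circle, and a short computation with the monotonicity of $\vee$ gives $c'<\bar f(c')$, i.e.\ $\bar f$ fails the intersection property. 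If you want to salvage your approach, you would need an argument of comparable strength to produce an essential circle strictly between the relevant frontiers, which is essentially what the envelope construction accomplishes.
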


We claim that there is no loss of generality in assuming that $\bar{f}$ has the non-intersection property. By hypothesis, $\bar{e} = \bar{f}^m\bar{g}^n$ has the non-intersection property. First, we can assume that $m$ and $n$ are relatively prime. If there were some $a, b, c$ such that $m = ca$ and $n = cb$, then $\bar{f}^m\bar{g}^n = (\bar{f}^a\bar{g}^b)^c$, and by Lemma \ref{IntersectionProperty}, $\bar{f}^a\bar{g}^b$ also has the non-intersection property.

Given that $m$ and $n$ are relatively prime, by elementary number theory it is possible to find $a$ and $b$ such that $am + bn = 1$. Then the matrix $\begin{pmatrix}
m & n \\
-b & a \end{pmatrix}$ has determinant 1, so it lies in $SL(2, \Z)$, and hence $(m, n)$ and $(-b, a)$ generate all of $\Z^2$. This implies that $f^mg^n$ and $f^{-b}g^a$ generate $fh^p$ and $gh^q$ for some $p$ and $q$. Since $[fh^p, gh^q] = h$, then, they also generate $f$ and $g$, so in fact $\langle f^mg^n, f^{-b}g^a\rangle = \langle f, g\rangle$; they differ by an automorphism of the Heisenberg group. Changing the given action by this automorphism, we may assume that $\bar{f}$ has the non-intersection property.

We will assume without loss of generality, conjugating if necessary, that $h = h_0$ is the horizontal translation by one. Since $\bar{f}$ has the non-intersection property, without loss of generality $f(x$-axis) $\cap$ $x$-axis $= \emptyset$.

\subsection{Proof of Theorem \ref{LiftedToralTheorem}}
We now begin the proof. Our goal will be to find some element $e$ of the action such that $e(x$-axis) $\cap$ $x$-axis $= \emptyset$ and the images $e^n(x$-axis) go to positive and negative infinity with no accumulation. Then $\bar{e}$ will be conjugate to $\bar{g_0}$, as desired.


Let $\mathbb{H}$ be the open lower half plane. Let

$$U = \bigcup_{n \in \Z} f^n(\mathbb{H}) \cap \bigcup_{n \in \Z} f^n(\mathbb{H}^c).$$

\noindent Equivalently, if we let $U_0$ be the open region between the $x$-axis and $f(x$-axis), then $U = \cup_{n \in \Z} f^n(U_0)$. 

If $U = \R^2$, then the images of the $x$-axis under $f$ are disjoint and go to positive and negative infinity with no accumulation, so we are done.

Therefore, we may assume that $U \subsetneq \R^2$. The proof will be done by considering how $g(U)$ may intersect $U$. There are three possibilities: (1) one contains the other; (2) they are disjoint; or (3) they are not disjoint and neither contains the other. We will show that case (1) results in a contradiction, while cases (2) and (3) lead to the desired result. Case (3) is the hard part of the proof.

\vspace{12pt}
\noindent
(1) Suppose that $U \subseteq g(U)$ or $g(U) \subseteq U.$ Without loss of generality, $U \subseteq g(U).$ Since the $x$-axis is contained in $U$, it is contained in $g(U)$, so $g^{-1}(x$-axis) is contained in $U$. After taking the quotient by $h$, $g^{-1}(x$-axis) is an embedded homologically nontrivial circle, so under iteration of $f$, $g^{-1}(x$-axis) moves toward the upper frontier of $U$. In particular, $$\bigcup_{n \in \Z}f^n(g^{-1}(\mathbb{H})) = \bigcup_{n \in \Z}f^n(\mathbb{H}).$$ It follows that $$g^{-1}(\bigcup_{n \in \Z}f^n(\mathbb{H})) = \bigcup_{n \in \Z}f^n(\mathbb{H}),$$ since $f$ and $g$ commute up to a horizontal translation and $\mathbb{H}$ is invariant under horizontal translation. Thus $$g(\bigcup_{n \in \Z}f^n(\mathbb{H})) = \bigcup_{n \in \Z}f^n(\mathbb{H}).$$ By similar reasoning, $$g(\bigcup_{n \in \Z}f^n(\mathbb{H}^c)) = \bigcup_{n \in \Z}f^n(\mathbb{H}^c),$$ so $g(U) = U.$ But this is impossible, by Lemma \ref{InvariantDomain}.

\vspace{12pt}
\noindent
(2) Suppose that $U$ and $g(U)$ are disjoint. Note that in this case $U$ must be bounded above and below. The $x$-axis and $g(x$-axis) must be disjoint. Also, $\bigcup_{n \in \Z}g^n(U) = \R^2$ by Lemma \ref{InvariantDomain}, since it is $f$- and $g$-invariant. It follows that the lines $g^n(x$-axis) have no accumulation, so we may conjugate $g$ to vertical translation while keeping $h$ as horizontal translation, giving us the desired result.

\vspace{12pt}
\noindent
(3) Suppose that $U$ and $g(U)$ are not disjoint, and also neither contains the other. There may be three regions: the region $X$ above $U$, $U$ itself, and the region $Y$ below $U$. Consider $g(x$-axis); \emph{a priori} it may intersect $X$; $X$ and $U$; $X, U,$ and $Y$; $U$; $U$ and $Y$; or $Y.$ However, it cannot only intersect $U$ since in that case we would have $g(U) = U$. And we have ruled out the cases where it only intersects $X$ or $Y,$ since then $U$ and $g(U)$ would be disjoint. Intersecting $X$ and $U$ or $U$ and $Y$ are essentially the same. So without loss of generality we must deal with the case (a) where $g(x$-axis) intersects $X$ and $U,$ and the case (b) where it intersects $X, U,$ and $Y.$ We will show that (a) leads to the desired result, and (b) leads to a contradiction.

\vspace{12pt}
\noindent
(a) Suppose $g(x$-axis) intersects $X$ and $U$ (but not $Y$). We can find a curve $\gamma \subset U$ lying strictly below $g(x$-axis) whose quotient in $\A = \R^2/h$ is essential. For sufficiently large $n$, $f^n(\gamma)$ lies above the $x$-axis, and hence $f^n(g(x$-axis)) lies above the $x$-axis. Since $f$ and $f^ng$ generate the same Heisenberg group as $f$ and $g,$ we may let $f^ng$ be the new $g,$ so without loss of generality $g(x$-axis) lies above the $x$-axis.

By Lemma \ref{InvariantDomain}, $\bigcup_{m, n \in \Z}f^mg^n(\mathbb{H}) = \R^2,$ where $\mathbb{H}$ is the open lower half plane. Let $\ell$ be a horizontal line above the $x$-axis in $\R^2,$ and let $C \subset \R^2$ be the region lying between the $x$-axis and $\ell$ (inclusive). Since $\{f^mg^n(\mathbb{H})\colon m, n \in \Z\}$ is an open cover of $C,$ and $C/h \subset \A$ is compact, there is a finite sub-cover $\{f^{m_1}g^{n_1}(\mathbb{H}), \ldots, f^{m_k}g^{n_k}(\mathbb{H})\}.$

We claim that if $m' \geq m$ and $n' \geq n,$ with at least one a strict inequality, then $f^{m'}g^{n'}(x$-axis) lies above $f^mg^n(x$-axis). For we have

$$\begin{array}{lll}
       f^{m'}g^{n'}(x\text{-axis}) & = & f^mg^nf^{m' - m}g^{n' - n}h^{(m' - m)(n' - n)}(x\text{-axis})\\
       \noalign{\medskip}
       & = & f^mg^nf^{m' - m}g^{n' - n}(x\text{-axis}).
     \end{array}$$

\noindent Since both $f$ and $g$ send the $x$-axis above itself, $f^{m' - m}g^{n' - n}(x$-axis) lies above the $x\text{-axis}$, hence $f^{m'}g^{n'}(x$-axis) $= f^mg^nf^{m' - m}g^{n' - n}(x$-axis) lies above $f^mg^n(x$-axis). Another way of saying this is that $f^{m'}g^{n'}(\mathbb{H}) \supset f^mg^n(\mathbb{H}).$

It follows that if we let $m = \max\{m_1, \ldots, m_k\}$ and $n = \max\{n_1, \ldots, n_k\},$ then $f^mg^n(\mathbb{H}) \supset C.$ Thus $f^mg^n(x$-axis) lies above $\ell.$ In particular, if $N = \max\{m, n\},$ then $f^Ng^N(x$-axis) lies above $\ell.$ Thus a high enough power of $fg$ sends the $x$-axis above $\ell$, where $\ell$ is arbitrarily high. A parallel argument, replacing the lower half plane with the upper half plane, shows that a sufficiently high negative power of $fg$ sends the $x$-axis below $\ell$ where $\ell$ is arbitrarily far below the $x$-axis.

Therefore, the map $fg$ has the desired properties. Note that this is really $f^{n + 1}g$, because above we changed $f^ng$ to $g$.

\vspace{12pt}
\noindent
(b) Suppose $g(x$-axis) intersects all three regions $X, U,$ and $Y.$ Let $x_1$ and $x_2$ be points in $g(x$-axis) on the lower and upper frontiers of $U$, respectively. Note that these frontiers are $f$-invariant sets, so $f(x_1)$ and $f(x_2)$ are also on the lower and upper frontiers of $U$, respectively. By this reasoning, $f^n(g(x$-axis)) $= g(f^n(x$-axis)) intersects $X, U,$ and $Y$ for every $n$, from which it follows that for any curve $\gamma \subset U$ whose image in $\R^2/h$ is simple, closed, and essential, $g(\gamma)$ intersects $X, U,$ and $Y$.

We will work in $\A = \R^2/h$. Let $\overline{x\text{-axis}} \subset \A$ be the quotient of the $x$-axis. Let $\bar{U} \subset \A$ be the quotient of $U$ by $h$.

\begin{definition}
We say that a curve $c \colon [0, 1] \to \A$ \emph{crosses} $\bar{U}$ if for some $t_1, t_2 \in [0, 1]$, $c(t_1)$ lies in the lower frontier of $\bar{U}$ and $c(t_2)$ lies in the upper frontier of $\bar{U}$. Letting $I$ be the interval between $t_1$ and $t_2$, exclusive, we call $c(I)$ a \emph{crossing} of $\bar{U}$.
\end{definition}

We claim that it is possible to find a component $\bar{V}$ of $\bar{g}(\bar{U}) \cap \bar{U}$ such that for any essential curve $c \subset \bar{U}$, $\bar{g}(c)$ crosses $\bar{U}$ in $\bar{V}_i$. Once we have shown this, we will have done most of the work towards getting a contradiction in case (3)(b), and finishing the proof of Theorem \ref{LiftedToralTheorem}.

Notice that by compactness of $\overline{x\text{-axis}}$ and continuity of $\bar{g}$, $\bar{g}(\overline{x\text{-axis}})$ can only cross $\bar{U}$ a finite number of times. Indeed, one can choose $\epsilon$ small enough so that an $\epsilon$-neighborhood of $\overline{x\text{-axis}}$ is contained in $\bar{U}$, and choose $\delta$ small enough so that whenever $\bar{x}, \bar{y} \in \overline{x\text{-axis}}$ and $d(\bar{x}, \bar{y}) < \delta$, then $d(\bar{g}(\bar{x}), \bar{g}(\bar{y})) < \epsilon$. Then the preimage under $\bar{g}$ of a crossing of $\bar{g}(\overline{x\text{-axis}})$ through $U$ is a subinterval of $\overline{x\text{-axis}}$ of length at least $\delta$, and there can only be finitely many of these. Therefore, there are finitely many connected components of $\bar{g}(\bar{U}) \cap \bar{U}$ containing a crossing of $\bar{g}(\overline{x\text{-axis}})$ through $\bar{U}$.

Let us denote by $s_1, \ldots, s_m \subset \bar{g}(\overline{x\text{-axis}}) \cap \bar{U}$ the crossings of $\bar{g}(\overline{x\text{-axis}})$ through $\bar{U}$, and by $\bar{V_1}, \ldots, \bar{V_n}$ the connected components of $\bar{g}(\bar{U}) \cap \bar{U}$ containing such a crossing, both numbered according to the cyclic arrangement of these sets around $\bar{U}$. Choose the numbering so that $\bar{V}_1$ contains $s_1, \ldots, s_{i_1}$, $\bar{V}_2$ contains $s_{i_1 + 1}, \ldots, s_{i_1 + i_2}$, and so on, up to $\bar{V}_n$ contains $s_{i_1 + \ldots + i_{n - 1} + 1}, \ldots, s_m$.



In Lemma \ref{NoCrossing}, we need the following extension of Schoenflies' Theorem. It follows straightforwardly from a theorem of Homma \cite{Homma}. We thank Fr\'{e}d\'{e}ric Le Roux for pointing it out to us.

\begin{theorem}
Let $\mathcal{F}, \mathcal{F}'$ be two locally finite families of pairwise disjoint topological oriented lines in the plane. Assume that for each $F \in \mathcal{F}$, there exists some $F' \in \mathcal{F}'$ and some orientation-preserving homeomorphism $\Phi_F\colon F \to F'$, in such a way that the map $F \mapsto \Phi_F(F)$ is a bijection between $\mathcal{F}$ and $\mathcal{F}'$. Assume that the correspondence $F \mapsto \Phi_F(F)$ \emph{preserves the combinatorics}: for every $F_1, F_2 \in \mathcal{F}$, if $F_2$ is on the right-hand side of $F_1$, then $\Phi_{F_2}(F_2)$ is on the right-hand side of $\Phi_{F_1}(F_1)$.

Then there exists an orientation-preserving homeomorphism $\Phi\colon \R^2 \to \R^2$ such that for every $F \in \mathcal{F}$, $\Phi|_F = \Phi_F$.
\end{theorem}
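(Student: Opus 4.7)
The plan is to assemble the maps $\Phi_F$ into a single homeomorphism of the closed set $X = \bigcup_{F \in \mathcal{F}} F$ onto $X' = \bigcup_{F' \in \mathcal{F}'} F'$, and then to appeal to Homma's theorem for the extension to all of $\R^2$. The hypotheses of the statement are designed precisely to verify the combinatorial/orientation conditions required by Homma's result.

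First I would define $\Phi_0 \colon X \to X'$ by $\Phi_0(x) = \Phi_F(x)$ whenever $x \in F$. Because the lines in $\mathcal{F}$ are pairwise disjoint, $\Phi_0$ is well-defined, and the analogous statement for $\mathcal{F}'$ makes $\Phi_0^{-1}$ well-defined. Local finiteness of $\mathcal{F}$ implies $X$ is closed in $\R^2$, and any sequence in $X$ converging to some $x \in X$ eventually lies in the single line $F$ containing $x$ (otherwise infinitely many lines would meet a small disk around $x$). Combined with the bicontinuity of each $\Phi_F$, this shows $\Phi_0$ is a homeomorphism of $X$ onto $X'$.

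Second I would analyze the complementary regions. Each connected component of $\R^2 \setminus X$ is either an open strip bounded by two consecutive lines of $\mathcal{F}$ or an open half-plane bounded by a single ``outermost'' line, and likewise for $\R^2 \setminus X'$; this structure follows from local finiteness together with the fact that disjoint oriented lines in the plane sit in a well-defined left/right relation with respect to one another. The combinatorics-preserving hypothesis yields a bijection between components of $\R^2 \setminus X$ and of $\R^2 \setminus X'$ so that corresponding components have boundary lines matched by $\Phi_0$ in an orientation-consistent way. Homma's theorem (applied to the pair $(X, X')$ and the homeomorphism $\Phi_0$) then produces an orientation-preserving homeomorphism $\Phi \colon \R^2 \to \R^2$ with $\Phi|_X = \Phi_0$, which is the desired extension.

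The main obstacle is verifying that Homma's theorem can be invoked in the form above: the classical statement is typically given for finite configurations of arcs or curves in $S^2$, while here we face an infinite, locally finite family of properly embedded lines in $\R^2$. I would handle this by exhausting $\R^2$ by an increasing sequence of closed disks $D_1 \subset D_2 \subset \cdots$ whose boundaries are transverse to all lines of $\mathcal{F} \cup \mathcal{F}'$ they meet. On each $D_n$ only finitely many lines appear, giving a finite configuration of arcs to which the compact form of Homma's theorem applies; the resulting finite-stage extensions can then be adjusted and glued together in the spirit of a standard Schoenflies-style inductive construction, using the combinatorics-preserving assumption at each stage to guarantee the adjustments remain compatible.
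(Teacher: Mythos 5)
Your overall route is the same one the paper intends: assemble the maps $\Phi_F$ into a homeomorphism $\Phi_0$ of the closed set $X=\bigcup_{F\in\mathcal F}F$ onto $X'$ and extend it using Homma's theorem. (Note the paper gives no proof at all -- it only asserts that the statement ``follows straightforwardly'' from \cite{Homma} -- so the deduction is precisely what a proof must supply.) Your first step is fine: local finiteness plus the fact that each line is a closed (properly embedded) subset makes $X$ closed and $\Phi_0$ a well-defined homeomorphism onto $X'$. But your second step rests on a false structural claim. It is not true that every component of $\R^2\setminus X$ is a strip bounded by two consecutive lines or a half-plane bounded by one line: take $\mathcal F$ to be the $y$-axis together with the two hyperbola branches $\{(x,\pm 1/x):x>0\}$; the middle complementary component has all three lines in its frontier, and with translated branches $\{y=1/(x-n),\,x>n\}$, $n\ge 0$, one gets a locally finite family with a single component whose frontier meets infinitely many lines. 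Disjoint lines in the plane are not linearly ordered in general, so the correspondence between complementary components of $X$ and of $X'$ -- which is exactly the combinatorial content one must extract from the pairwise left/right hypothesis before any extension theorem can be invoked -- does not follow from the picture you describe, and your argument for it collapses as written.

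The second gap is the extension itself. You correctly identify that the classical compact form of a Schoenflies/Homma-type theorem does not directly apply to an infinite locally finite family, but the proposed fix (exhaust by disks $D_n$, extend on each, then ``adjust and glue in the spirit of a standard Schoenflies-style inductive construction'') is only a plan, and it hides all the work: $F\cap D_n$ is in general a finite union of arcs rather than one arc; arranging $\partial D_n$ to meet the topological lines nicely requires an argument; the images $\Phi_F(F\cap D_n)$ need not sit in any common disk, so even the finite-stage statement has to be formulated with care; and, most importantly, the extension obtained over $D_{n+1}$ need not agree on $D_n$ with the previous stage, and making it do so requires showing the two are isotopic rel $X\cap D_n$ (this is where the combinatorics-preserving hypothesis must actually be used, not just invoked). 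Until the component-matching is derived correctly from the left/right hypothesis and the gluing/compatibility step is carried out (or a version of Homma's theorem is quoted that genuinely covers closed, non-compact, locally finite configurations and whose hypotheses you verify), the proposal remains an outline rather than a proof.
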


\begin{lemma}
\label{NoCrossing}
The crossings $s_i$ and $s_{i + 1}$ are oriented in opposite directions: if one goes from $Y$ to $X$, then the other goes from $X$ to $Y$.

Furthermore, if the crossing $s_i$ goes from the lower frontier of $\bar{U}$ to the upper frontier of $\bar{U}$ (and $s_{i + 1}$ goes from the upper frontier to the lower frontier), then for any essential curve $c$ above $\overline{x\text{-axis}}$, $\bar{g}(c)$ cannot cross $\bar{U}$ between $s_i$ and $s_{i + 1}$. If $s_i$ goes from the upper frontier of $\bar{U}$ to the lower frontier of $\bar{U}$, then for any essential curve $c$ below $\overline{x\text{-axis}}$, $\bar{g}(c)$ cannot cross $\bar{U}$ between $s_i$ and $s_{i + 1}$.
\end{lemma}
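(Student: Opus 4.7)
My plan has two parts, handling the alternation of orientations and the non-crossing claim separately. For the alternation: $\bar{U}$ is an essential sub-annulus of $\A$ containing $\overline{x\text{-axis}}$, so $\A \setminus \bar{U}$ has an upper component $X$ and a lower component $Y$, with $\bar{U}$ separating them. Trace $\bar{g}(\overline{x\text{-axis}})$ around $\A$: after $s_i$ exits $\bar{U}$, say through the upper frontier, the curve lies in $X$; since $X$ is bounded below by the upper frontier of $\bar{U}$ and is disjoint from $Y$, the curve can only re-enter $\bar{U}$ through the upper frontier. It may make several non-crossing excursions into $\bar{U}$, each entering and exiting through the upper frontier, but eventually begins $s_{i+1}$. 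Since $s_{i+1}$ starts on the upper frontier and is a crossing, it must terminate on the lower frontier, reversing the orientation of $s_i$.

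For the non-crossing claim, assume $s_i$ runs from the lower to the upper frontier with upper endpoint $p_i$, and $s_{i+1}$ runs from the upper to the lower frontier with upper endpoint $p_{i+1}$. Together with the arc $\alpha$ of the upper frontier of $\bar{U}$ from $p_i$ to $p_{i+1}$ (not passing through upper endpoints of other $s_j$) and the analogous lower-frontier arc $\beta$, the arcs $s_i$ and $s_{i+1}$ bound a topological disk $R \subset \bar{U}$; this is the region ``between $s_i$ and $s_{i+1}$.'' Let $\gamma$ be the segment of $\bar{g}(\overline{x\text{-axis}})$ from $p_i$ to $p_{i+1}$ that lies in $X$. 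Then $\gamma \cup \alpha$ bounds a disk in $X \cup \alpha$, so $\gamma$ lies on the side of $\alpha$ opposite to $R$.

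Now let $W \subset \bar{U}$ be the sub-annulus bounded below by $\overline{x\text{-axis}}$ and above by $c$. Its image $\bar{g}(W) \subset \bar{g}(\bar{U})$ is a sub-annulus on the upper side of $\bar{g}(\overline{x\text{-axis}})$ within $\bar{g}(\bar{U})$, with $\bar{g}(c)$ as its far boundary. I would lift to $\R^2$ and invoke the Schoenflies--Homma theorem cited just above to straighten the family of $\bar{f}$-translates of $\overline{x\text{-axis}}$ (and also $c$) into horizontal lines, preserving the combinatorics of the lifts of $s_i$, $s_{i+1}$, and $\gamma$. In the straightened picture it becomes visible which side of a lift of $s_i$ contains the locally-adjacent part of $\bar{g}(W)$: it is the same side as the locally-adjacent piece of $\gamma$, which, by the previous paragraph, is the side of $s_i$ away from $R$. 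Thus a neighborhood of $s_i \cup s_{i+1}$ in $\bar{g}(W)$ lies outside $R$; a crossing of $\bar{g}(c)$ through $R$ would force $\bar{g}(W)$ to extend into $R$ near $s_i$ or $s_{i+1}$, a contradiction. The main obstacle is rigorously identifying the ``upper side of $\bar{g}(\overline{x\text{-axis}})$'' locally near $s_i$, and this is exactly what the Homma theorem accomplishes by preserving the oriented-line combinatorics under the straightening.
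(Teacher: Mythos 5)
There are two genuine gaps. First, the indexing: the crossings $s_1,\ldots,s_m$ are numbered by their \emph{cyclic position around} $\bar U$, not by the order in which they occur along $\bar g(\overline{x\text{-axis}})$. Your alternation argument traces the curve and asserts that after $s_i$ the curve ``eventually begins $s_{i+1}$,'' i.e.\ that the next crossing met along the curve is the spatially adjacent one. That is unjustified: after exiting through the upper frontier the curve may wander in $X$ all the way around the annulus and next cross $\bar U$ at some spatially distant $s_j$. What your trace argument actually shows is that crossings alternate \emph{in the order they occur along the curve}, which is a different statement from the lemma (and the later applications genuinely need spatial adjacency, since ``between $s_i$ and $s_{i+1}$'' is the spatial region $\bar U_i$). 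The same confusion infects the second half: the ``segment $\gamma$ of $\bar g(\overline{x\text{-axis}})$ from $p_i$ to $p_{i+1}$ that lies in $X$'' need not exist, both because $p_i$ and $p_{i+1}$ need not be consecutive endpoints along the curve and because even a sub-arc joining them may make excursions back into $\bar U$.

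Second, regularity of the frontier: your region $R$ is built from arcs $\alpha,\beta$ \emph{inside the upper and lower frontiers of} $\bar U$ joining endpoints of $s_i$ and $s_{i+1}$. But $\bar U$ is the projection of $U=\bigcup_n f^n(\mathbb{H})\cap\bigcup_n f^n(\mathbb{H}^c)$, whose frontier can be a complicated, non--locally-connected continuum; there is no reason it contains such arcs, so the disk $R$ and the statement ``$\gamma\cup\alpha$ bounds a disk in $X\cup\alpha$'' are not available. This is exactly why the paper avoids the frontier altogether: it takes $\bar U_i$ to be the component of $\bar U\setminus(s_1\cup\cdots\cup s_{i+1})$ between $s_i$ and $s_{i+1}$, straightens $\bar U_i\cup s_i\cup s_{i+1}$ to the open-ended strip $[0,1]\times\R$ by Schoenflies--Homma, and then does real work to produce an arc from $s_i$ to $s_{i+1}$ inside $\bar U_i$ avoiding the (possibly infinitely many) components of $\bar g(\overline{x\text{-axis}})\cap\bar U_i$; orientation-preservation of $\bar g$ then shows this arc lies below $\bar g(\overline{x\text{-axis}})$, which simultaneously yields the alternation of $s_i$ and $s_{i+1}$ and the non-crossing claim (any crossing of $\bar U_i$ must meet the arc, hence dip below $\bar g(\overline{x\text{-axis}})$, impossible for $\bar g(c)$ with $c$ above $\overline{x\text{-axis}}$). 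Your proposal has no substitute for this connecting-arc construction; its final step (``a crossing of $\bar g(c)$ through $R$ would force $\bar g(W)$ to extend into $R$ near $s_i$ or $s_{i+1}$'') is asserted rather than proved, and the suggestion to straighten the family of $\bar f$-translates of $\overline{x\text{-axis}}$ does not engage the relevant objects, which are $\bar g(\overline{x\text{-axis}})$, $\bar g(c)$, and the crossings themselves.
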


The ideas of the proof are illustrated in Figure 1.

\begin{figure}
\centering
\includegraphics[width=3in]{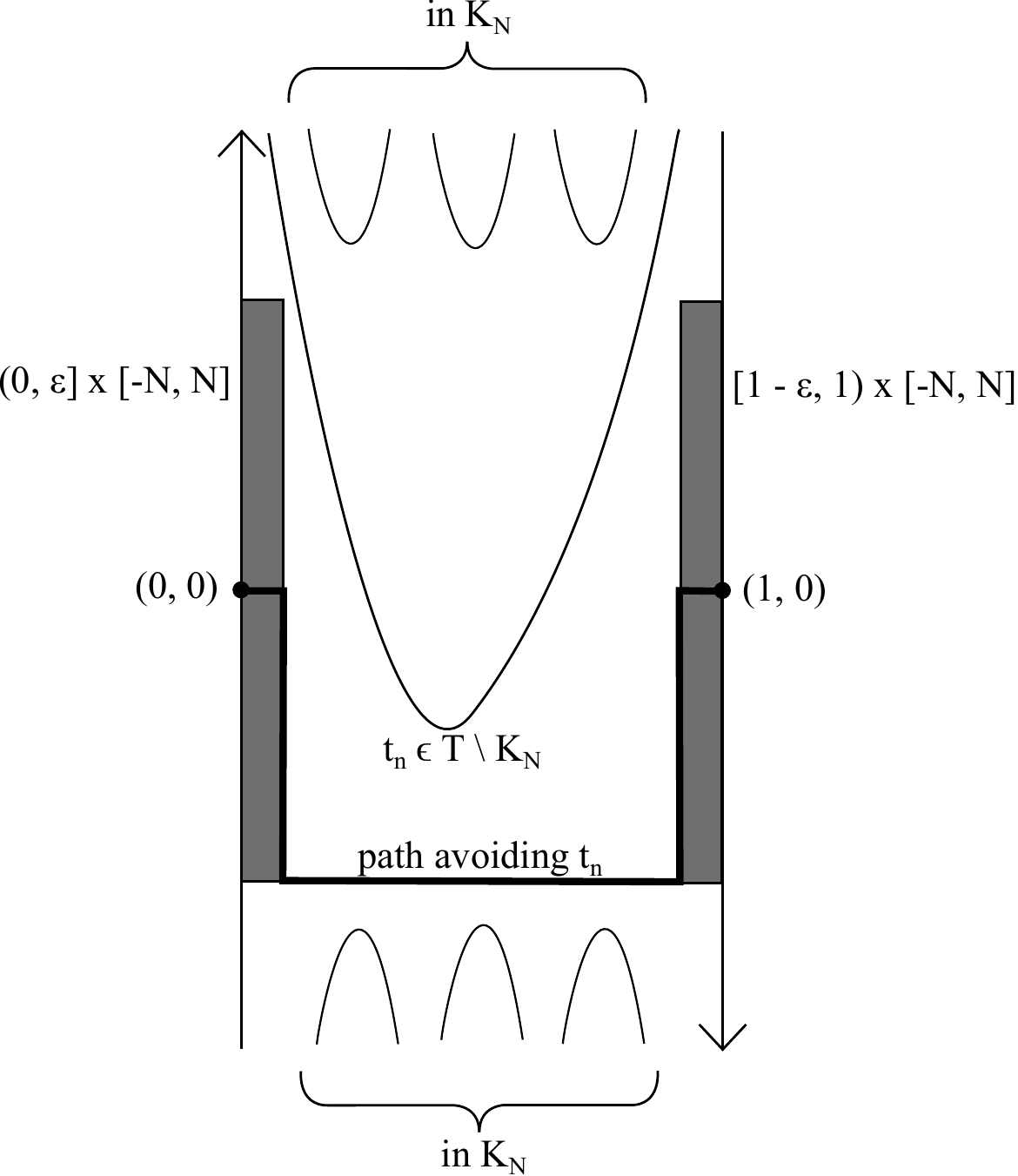}
\caption{Proof of Lemma \ref{NoCrossing}}
\end{figure}

\begin{proof}
Assume without loss of generality that $s_i$ goes from the lower frontier of $\bar{U}$ to the upper frontier of $\bar{U}$. Let $\bar{U}_i$ be the connected component of $\bar{U} \setminus (s_1 \cup \ldots \cup s_{i + 1})$ lying between $s_i$ and $s_{i+ 1}$.

The key will be to show that if $p_i, p_{i + 1}$ are points on $s_i, s_{i + 1}$ respectively, then there is a curve $\gamma\colon [0, 1] \to \A$ such that $\gamma(0) = p_i$, $\gamma(1) = p_{i + 1}$, and $\gamma((0, 1)) \subset \bar{U}_i$.

By the Schoenflies-Homma Theorem, there is a homeomorphism from $\bar{U}_i \cup s_i \cup s_{i + 1}$ to $[0, 1] \times \R$ which is orientation-preserving (with the standard orientation on $\R^2$ and the orientation it induces on $\A = \R^2/h$) and sends $s_i$ to $\{0\} \times \R$, oriented upward, and $s_{i + 1}$ to $\{1\} \times \R$. Using these coordinates, without loss of generality, we are trying to find a curve from $(0, 0)$ to $(1, 0)$ not intersecting $\bar{g}(\overline{x\text{-axis}})$.

First note that for any $N > 0$, there is an $\epsilon > 0$ such that the intersection of $\bar{g}(\overline{x\text{-axis}})$ with $(0, \epsilon] \times [-N, N]$ is empty, and the intersection of $\bar{g}(\overline{x\text{-axis}})$ with $[1 - \epsilon, 1) \times [-N, N]$ is empty. This is by the compactness of $\bar{g}(\overline{x\text{-axis}})$. We may take the beginning of our curve to be a horizontal line from $(0, 0)$ to $(\epsilon, 0)$, and the end to be a horizontal line from $(1 - \epsilon, 0)$ to $(1, 0)$. We must therefore find a curve from $(\epsilon, 0)$ to $(1 - \epsilon, 0)$ that avoids $\bar{g}(\overline{x\text{-axis}})$.


Now there is some countable collection $T = \{t_n\}$ of connected components of $\bar{g}(\overline{x\text{-axis}}) \cap \bar{U}_i$. They all start and end at positive infinity or negative infinity. For any $N > 0$, let $K_N = \{t_n \in T \colon t_n \subset (0, 1) \times (N, \infty) \text{ or } t_n \subset (0, 1) \times (-\infty, -N)\}$. Note that $K_N$ contains all but finitely many of the $t_n$, for the same reasons that (as we argued above) $\bar{g}(\overline{x\text{-axis}})$ can cross $\bar{U}$ only finitely many times. Choose $N$ to be large enough so that every $t_n$ starting and ending at positive infinity is contained in $(0, 1) \times (-N, \infty)$, and every $t_n$ starting and ending at negative infinity is contained in $(0, 1) \times (-\infty, N)$.

Notice that no $t_n \in T \setminus K_N$ disconnects $(\epsilon, 0)$ from $(1 - \epsilon, 0)$ in the open set $(0, 1) \times (-N, N)$. For suppose without loss of generality that $t_n$ starts and ends at positive infinity. Then we can take a curve from $(\epsilon, 0)$ to $(1 - \epsilon, 0)$ that consists of straight lines from $(\epsilon, 0)$ to $(\epsilon, -N)$, from $(\epsilon, -N)$ to $(1 - \epsilon, -N)$, and from $(1 - \epsilon, -N)$ to $(1 - \epsilon, 0)$. Such a path will not cross $t_n$, by the choice of $N$ and $\epsilon$. Since no single $t_n \in T \setminus K_N$ disconnects $(\epsilon, 0)$ from $(1 - \epsilon, 0)$ in $(0, 1) \times (-N, N)$, applying the Schoenflies-Homma Theorem again, their union does not either.

Thus, there is a path from $(\epsilon, 0)$ to $(1 - \epsilon, 0)$ that avoids all $t_n \in T$, i.e., avoids $\bar{g}(\overline{x\text{-axis}})$. Combining this with the horizontal paths from $(0, 0)$ to $(\epsilon, 0)$ and from $(1 - \epsilon, 0)$ to $(1, 0)$, we get our desired curve $\gamma$ from $(0, 0)$ to $(1, 0)$.

Now observe that $\gamma$ leaves $\{0\} \times \R$ (oriented upward) pointing to the right. Since $\bar{g}$ is orientation-preserving, $\bar{g}^{-1}(\gamma)$ leaves $\overline{x\text{-axis}}$ (oriented rightward) also pointing to the right; that is, downward. Since $\gamma$ does not intersect $\bar{g}(\overline{x\text{-axis}})$ except at its endpoints, $\gamma((0, 1))$ lies below $\bar{g}(\overline{x\text{-axis}})$. Since as $t \to 1$, $\bar{g}^{-1}(\gamma(t))$ approaches $\overline{x\text{-axis}}$ from below, by the same reasoning the line $\{1\} \times \R$ in our picture must be oriented downward. That is, the crossing $s_{i + 1}$ goes from the upper frontier of $\bar{U}$ to the lower frontier of $\bar{U}$.

Any curve crossing from the upper frontier to the lower frontier, or vice versa, in $\bar{U}_i$ must cross $\gamma$. Therefore, such a curve must at some point go below $\bar{g}(\overline{x\text{-axis}})$, so if $c$ lies above $\overline{x\text{-axis}}$, $\bar{g}(c)$ cannot cross $\bar{U}$ between $s_i$ and $s_{i + 1}$.

\end{proof}

\begin{lemma}
\label{SameComponent}
Assume the crossing $s_i$ goes from the lower frontier of $\bar{U}$ to the upper frontier (and $s_{i + 1}$ goes from the upper frontier to the lower frontier). The crossings $s_i$ and $s_{i + 1}$ are in the same connected component of $\bar{g}(\bar{U}) \cap \bar{U}$ if and only if, for some essential curve $c \subset \bar{U}$ lying below $\overline{x\text{-axis}}$, $\bar{g}(c)$ does not cross from one frontier to the other in $\bar{U}_i$. Furthermore, if $s_i$ and $s_{i + 1}$ are not in the same connected component, then for every $c$ below $\overline{x\text{-axis}}$, the component of $\bar{g}(\bar{U}) \cap \bar{U}$ containing $s_i$ also contains a crossing of $\bar{g}(c)$, as does the component of $\bar{g}(\bar{U}) \cap \bar{U}$ containing $s_{i + 1}$.

The same facts hold \emph{mutatis mutandis}, switching ``lower'' and ``upper'' and replacing ``below'' with ``above.''
\end{lemma}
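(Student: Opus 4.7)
The plan is to reduce the statement to a local topological question about $\bar{g}(\bar{B}) \cap \bar{U}_i$, where $\bar{B}$ denotes the open sub-annulus of $\bar{U}$ strictly below $\overline{x\text{-axis}}$. As in Lemma \ref{NoCrossing}, since $\bar{g}$ is orientation-preserving and the orientations of $s_i$ (upward across $\bar{U}$) and $s_{i+1}$ (downward) both place the interior of $\bar{U}_i$ on their right, the sides of $s_i$ and $s_{i+1}$ interior to $\bar{U}_i$ lie in $\bar{g}(\bar{B})$ (the right-hand side of $\overline{x\text{-axis}}$). Let $\bar{W}_i$ and $\bar{W}_{i+1}$ be the connected components of $\bar{g}(\bar{B}) \cap \bar{U}_i$ abutting $s_i$ and $s_{i+1}$, respectively; using $s_i$ itself as a bridge, the closure of $\bar{W}_i$ meets both the upper and lower frontier of $\bar{U}$ in $\bar{U}_i$, and similarly for $\bar{W}_{i+1}$.

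The central technical claim to establish is: $s_i$ and $s_{i+1}$ lie in the same connected component of $\bar{g}(\bar{U}) \cap \bar{U}$ if and only if $\bar{W}_i = \bar{W}_{i+1}$. The implication $\bar{W}_i = \bar{W}_{i+1} \Rightarrow$ same component is immediate: concatenate a path inside the common region with short segments on $s_i$ and $s_{i+1}$. For the converse, I would work in the Schoenflies-Homma coordinates $\bar{U}_i \cup s_i \cup s_{i+1} \cong [0,1] \times \R$ from Lemma \ref{NoCrossing}, in which all interior arcs of $\bar{g}(\overline{x\text{-axis}})$ are ``bumps'' with both endpoints at $+\infty$ (the upper frontier of $\bar{U}$) or both at $-\infty$ (the lower frontier), since no other full crossings lie in $\bar{U}_i$. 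Orientation-preservation of $\bar{g}$ classifies each bump's enclosed region as lying in either $\bar{g}(\bar{A})$ or $\bar{g}(\bar{B})$. I would then show that any path from $s_i$ to $s_{i+1}$ in $\bar{g}(\bar{U}) \cap \bar{U}$ may be isotoped (rel endpoints) to a path in $\bar{g}(\bar{B}) \cap \bar{U}_i$ joining $\bar{W}_i$ to $\bar{W}_{i+1}$; this involves pushing the path off excursions into $\bar{g}(\bar{A})$-bump-interiors and out of neighboring $\bar{U}_k$'s via another application of the Schoenflies-Homma theorem, exploiting the cyclic ordering of crossings around $\bar{U}$.

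Granted the central claim, the equivalence in the lemma is proved by the following constructions. If $\bar{W}_i = \bar{W}_{i+1}$, pick a simple arc $\alpha$ in this common region joining a point near the bottom of $s_i$ to a point near the bottom of $s_{i+1}$ (both close to the lower frontier of $\bar{U}$, which is in the closure of $\bar{W}_i$). Take $c$ to be an essential simple closed curve in $\bar{B}$ close to the lower frontier of $\bar{U}$ outside $\bar{g}^{-1}(\bar{U}_i)$, and modified inside $\bar{g}^{-1}(\bar{U}_i)$ to trace $\bar{g}^{-1}(\alpha)$. Then $\bar{g}(c) \cap \bar{U}_i$ is a single arc following $\alpha$, both of whose endpoints lie on the lower frontier — a bump, not a crossing. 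Conversely, if some essential $c \subset \bar{B}$ has $\bar{g}(c)$ with no crossings in $\bar{U}_i$, every arc of $\bar{g}(c) \cap \bar{U}_i$ is a bump confined to a single component of $\bar{g}(\bar{B}) \cap \bar{U}_i$; tracking the essential circle $\bar{g}(c)$ cyclically around $\A$ forces the $s_i$- and $s_{i+1}$-adjacent components to coincide, via a winding-type argument using that $\bar{g}(c)$ is freely homotopic to $\bar{g}(\overline{x\text{-axis}})$ in $\bar{g}(\bar{B})$.

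For the furthermore assertion, use that $\bar{W}_i$ and $\bar{W}_{i+1}$ are always ``crossing-capable'' — their closures meet both the upper and lower frontiers of $\bar{U}$. When $\bar{W}_i \neq \bar{W}_{i+1}$, any essential $\bar{g}(c) \subset \bar{g}(\bar{B})$ must produce at least one crossing of $\bar{U}$ inside each of them (the essential homology class forces such a curve to realize the connection between the two frontiers in each distinct crossing-capable region it visits). Those crossings lie in the distinct components of $\bar{g}(\bar{U}) \cap \bar{U}$ containing $s_i$ and $s_{i+1}$, respectively. The main anticipated obstacle is the path-rerouting step in the converse of the central claim: one must carefully control the combinatorics of bumps of $\bar{g}(\overline{x\text{-axis}})$, their $\bar{g}(\bar{A})$- versus $\bar{g}(\bar{B})$-orientations, and the global topology of how $\bar{g}(\bar{U})$ re-enters $\bar{U}$ elsewhere, in order to exclude genuine ``shortcut'' paths connecting $s_i$ and $s_{i+1}$ when $\bar{W}_i \neq \bar{W}_{i+1}$.
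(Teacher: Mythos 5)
Your plan funnels everything through the auxiliary claim that $s_i$ and $s_{i+1}$ lie in the same component of $\bar{g}(\bar{U}) \cap \bar{U}$ if and only if $\bar{W}_i = \bar{W}_{i+1}$, but the hard half of that claim is exactly where the content of the lemma lives, and you do not supply it. A curve joining $s_i$ to $s_{i+1}$ inside $\bar{g}(\bar{U}) \cap \bar{U}$ may leave $\bar{U}_i$ entirely and travel around the annulus through parts of $\bar{g}(\bar{U})$ far from $\bar{g}(\overline{x\text{-axis}})$; ``pushing the path off bump interiors and out of neighboring $\bar{U}_k$'s'' is not an argument that it can be isotoped within $\bar{g}(\bar{U}) \cap \bar{U}$ into $\bar{g}(\bar{B}) \cap \bar{U}_i$, and as far as I can see that rerouting claim is essentially equivalent to the lemma itself. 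The paper avoids this entirely by a different device: given a connecting curve $\gamma \subset \bar{g}(\bar{U}) \cap \bar{U}$, the set $\bar{g}^{-1}(\gamma)$ is compact, contained in $\bar{U}$, with endpoints on $\overline{x\text{-axis}}$, so one may choose the essential curve $c$ \emph{below this compact set}; then $\bar{g}(c)$ misses $\gamma$ and a separation argument rules out a crossing of $\bar{U}_i$. Your substitute construction of $c$ (a low curve spliced with $\bar{g}^{-1}(\alpha)$) is also gappy: nothing keeps $\bar{g}^{-1}(\bar{U}_i)$ away from the lower frontier of $\bar{U}$, so the unmodified part of your curve may re-enter $\bar{U}_i$ under $\bar{g}$, and you give no reason the resulting extra arcs are not crossings (nor that the spliced curve remains simple and essential).

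The other two assertions are likewise left as slogans. For ``some $c$ with no crossing in $\bar{U}_i$ implies same component'' you invoke a ``winding-type argument'' with no mechanism; the paper instead reruns the Schoenflies--Homma path construction of Lemma \ref{NoCrossing} with the arcs of $\bar{g}(c) \cap \bar{U}_i$ added to the family of arcs to be avoided, and observes that the resulting path from $s_i$ to $s_{i+1}$ lies between $\bar{g}(\overline{x\text{-axis}})$ and $\bar{g}(c)$, hence inside $\bar{g}(\bar{U}) \cap \bar{U}$. For the ``furthermore'' part, your appeal to the essential homology class does not explain why $\bar{g}(c)$ must cross inside $\bar{W}_i$ at all; the paper takes the crossing $t$ of $\bar{g}(c)$ in $\bar{U}_i$ nearest to $s_i$ and applies the same path construction to the region between $s_i$ and $t$, where neither $\bar{g}(\overline{x\text{-axis}})$ nor $\bar{g}(c)$ crosses, to connect $s_i$ to $t$ within $\bar{g}(\bar{U}) \cap \bar{U}$. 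Your orientation bookkeeping (that the $\bar{U}_i$-sides of $s_i$ and $s_{i+1}$ lie in $\bar{g}(\bar{B})$) agrees with the paper, but each of the three substantive steps of your proposal is currently an unsubstantiated assertion rather than a proof.
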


\begin{proof}
Suppose $s_i$ and $s_{i + 1}$ are in the same connected component. Then there is a curve $\gamma$ from $s_i$ to $s_{i + 1}$ such that $\gamma \subset \bar{g}(\bar{U}) \cap \bar{U}$. Since $\bar{g}^{-1}(\gamma)$ is compact, starts and ends in $\overline{x\text{-axis}}$, and is contained in $\bar{U}$, there is an essential curve $c \subset \bar{U}$ which is low enough that it does not intersect $\bar{g}^{-1}(\gamma)$. Thus, $\bar{g}(c)$ cannot cross from one frontier to the other of $\bar{U}_i$.

Now suppose that there is an essential curve $c$ below $\overline{x\text{-axis}}$ such that $\bar{g}(c)$ does not cross $\bar{U}_i$. By Lemma \ref{NoCrossing}, we know there is a curve $\gamma$ from $s_i$ to $s_{i + 1}$ whose interior does not intersect $\bar{g}(\overline{x\text{-axis}})$. By the same reasoning, now also taking into consideration $\bar{g}(c) \cap \bar{U}_i$, we can assume that in addition $\gamma \cap \bar{g}(c) = \emptyset$. Therefore, $\gamma$ lies between $\bar{g}(\overline{x\text{-axis}})$ and $\bar{g}(c)$; in particular, it is contained in $\bar{g}(\bar{U})$. Since $\gamma$ is also contained in $\bar{U}$, $s_i$ and $s_{i + 1}$ are in the same connected component of $\bar{g}(\bar{U}) \cap \bar{U}$.

Finally, assume $s_i$ and $s_{i + 1}$ are not in the same connected component of $\bar{g}(\bar{U}) \cap \bar{U}$. Let $c$ be an essential curve in $\bar{U}$ lying below $\overline{x\text{-axis}}$. Let $t$ be the crossing of $\bar{g}(c)$ between $s_i$ and $s_{i + 1}$ that is closest to $s_i$. Since neither $\bar{g}(\overline{x\text{-axis}})$ nor $\bar{g}(c)$ cross $\bar{U}$ between $s_i$ and $t$, applying the reasoning of the above paragraph we conclude that $s_i$ and $t$ are in the same connected component of $\bar{g}(\bar{U}) \cap \bar{U}$.
\end{proof}

Recall we denoted by $\bar{V}_i$ the connected components of $\bar{g}(\bar{U}) \cap \bar{U}$ containing at least one crossing $s_j$ of $\bar{g}(\overline{x\text{-axis}})$.

\begin{lemma}
Some $\bar{V}_i$ has the property that for every essential curve $c \subset \bar{U}$, $\bar{g}(c)$ crosses $\bar{U}$ in $\bar{V}_i$.
\end{lemma}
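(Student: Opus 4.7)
The plan is to argue by contradiction. Suppose no $\bar{V}_i$ has the claimed property; then for each $i$ there is some essential $c_i \subset \bar{U}$ whose image $\bar{g}(c_i)$ misses $\bar{V}_i$. After a small isotopy, we may assume each $c_i$ lies strictly above or strictly below $\overline{x\text{-axis}}$. I would also first observe that $\bar{g}(c)$ must cross $\bar{U}$ at all: since $\bar{g}(c)$ is essential in $\A$ and we are in case (3)(b) (so $\bar{g}(\bar{U})$ meets $X$ and $Y$), $\bar{g}(c)$ cannot stay entirely in any one of $X$, $\bar{U}$, $Y$, hence it has at least one crossing.

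Step one is to extract cyclic alternation. By Lemma \ref{NoCrossing} the crossings $s_1, \ldots, s_m$ alternate in orientation around $\bar{U}$, so $m$ is even. Call the gap $\bar{U}_i$ between $s_i$ and $s_{i+1}$ a \emph{down-gap} if $s_i$ points up and a \emph{up-gap} if $s_i$ points down. Lemma \ref{NoCrossing} tells us that crossings of $\bar{g}(c)$ for $c$ above $\overline{x\text{-axis}}$ occur only inside up-gaps, and for $c$ below only inside down-gaps.

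Step two is to recast Lemma \ref{SameComponent} combinatorially. Form a graph $\Gamma$ on vertex set $\{\bar{V}_1, \ldots, \bar{V}_n\}$, with an \emph{up-edge} joining two vertices whenever they contain the two endpoints of some up-gap and a \emph{down-edge} for down-gaps. The ``furthermore'' clause of Lemma \ref{SameComponent} yields: if $\bar{V}$ is incident to a down-edge, then \emph{every} essential $c$ below $\overline{x\text{-axis}}$ has $\bar{g}(c)$ with a crossing in $\bar{V}$, and the analogous statement holds for up-edges with curves above.

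Step three is to locate a single $\bar{V}_i$ incident to both types of edge. Reading the sequence $V(s_1), V(s_2), \ldots, V(s_m)$ around $\bar{U}$, a transition $V(s_j) \neq V(s_{j+1})$ occurs across a gap whose type alternates as $j$ increases (because the directions of $s_j$ alternate). If every component were incident to at most one type of edge, then as we go once around $\bar{U}$ the pattern of ``same versus different'' across consecutive gaps would have to be consistent with a bipartition of the crossings into pairs of identical down–up or up–down type, which conflicts with the annular topology of $\bar{U}$ and $\bar{g}(\bar{U})$ once we invoke $\bar{f}$-invariance: since $\bar{f}$ commutes with $\bar{g}$, preserves $\bar{U}$, is isotopic to the identity, and pushes $\overline{x\text{-axis}}$ upward, $\bar{f}$ acts on components of $\bar{g}(\bar{U}) \cap \bar{U}$ preserving the cyclic order, and an $\bar{f}$-orbit argument on the vertices of $\Gamma$ yields a vertex that must be adjacent to gaps of both types. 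That vertex is the $\bar{V}_i$ we seek: by step two it catches a crossing of $\bar{g}(c)$ for every essential $c$, whether $c$ lies above or below $\overline{x\text{-axis}}$.

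The hardest part I expect is step three: ruling out the ``balanced partition'' scenario in which, say, $\bar{V}_1$ collects all up–down pairs and $\bar{V}_2$ collects all down–up pairs, so that $\Gamma$ has only one type of edge. Converting the combinatorial obstruction into a genuine topological contradiction — essentially showing that such a partition would force $\bar{g}(\bar{U}) \cap \bar{U}$ to be disconnected in a way incompatible with the annular structure and with the $\bar{f}$-action preserving cyclic order — is the technical core of the lemma.
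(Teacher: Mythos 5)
Your setup (alternating orientations of the $s_j$, the reformulation of Lemma \ref{SameComponent} as ``a gap whose two bounding crossings lie in \emph{different} components forces every curve on the appropriate side to cross in both of those components'') matches the paper, and your goal in step three --- find one component incident to a non-loop gap of each type --- is exactly the right target. But step three, which you yourself flag as the technical core, is where your proposal has a genuine gap: you never rule out the configuration in which every component $\bar{V}_i$ contains an even number of crossings, so that all its crossings pair off internally and no component is flanked on both sides by gaps joining it to \emph{other} components. An appeal to $\bar{f}$-equivariance and preservation of cyclic order cannot do this by itself: the ``balanced'' configuration is combinatorially consistent and compatible with an $\bar{f}$-action permuting the components, so no orbit argument on the graph $\Gamma$ alone will produce the desired vertex. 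Some topological/dynamical input beyond the combinatorics is required, and your sketch does not identify it.

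The paper supplies that input with a parity argument you are missing. Foliate $\bar{U}$ by essential curves $c_t$ with $c_0 = \overline{x\text{-axis}}$. If every component contained an even number of crossings, then (with the cyclic numbering) each gap in which curves below the $x$-axis are allowed to cross, by Lemma \ref{NoCrossing}, would have its two bounding crossings in the \emph{same} component; the first half of Lemma \ref{SameComponent} then gives, for each such gap, a curve $c_{t_j}$ low enough that its $\bar{g}$-image misses that gap, and taking $T$ to be the minimum of the finitely many $t_j$ yields a curve $c_T$ whose image crosses $\bar{U}$ nowhere. This contradicts the fact, established at the start of case (3)(b), that $\bar{g}(c)$ meets $X$, $\bar{U}$, and $Y$ for every essential $c \subset \bar{U}$ (the observation you correctly make at the outset, but never exploit). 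Hence some $\bar{V}_i$ contains an odd number of crossings; its first and last crossings then point the same way, so the two gaps immediately outside it are non-loop gaps of the two opposite types, and the ``furthermore'' clause of Lemma \ref{SameComponent} applied on each side gives the conclusion. Note also that $\bar{f}$ plays no role in this lemma at all --- it enters only in the paragraph after it --- so your step three is not just incomplete but aimed at the wrong mechanism.
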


\begin{proof}
We claim that it is impossible for every $\bar{V}_i$ to contain an even number of crossings. For, suppose otherwise. Suppose, without loss of generality, that $s_1$ goes from the lower frontier of $\bar{U}$ to the upper frontier.

Foliate $\bar{U}$ by essential curves $c_t, t \in \R$, such that $c_0 = \overline{x\text{-axis}}$. By Lemma \ref{NoCrossing}, for $t < 0$, $c_t$ can only cross $\bar{U}$ between $s_j$ and $s_{j + 1}$ if $j$ is odd. Such $s_j$, $s_{j + 1}$ must lie in the same $\bar{V}_i$. Hence, by Lemma \ref{SameComponent}, there is some $t_j < 0$ low enough so that $c_{t_j}$ does not cross $\bar{U}$ between $s_j$ and $s_{j + 1}$. Letting $T$ be the minimum of all these $t_j$, $c_T$ cannot cross $\bar{U}$ at all. This is a contradiction.

Therefore, some $\bar{V}_i$ contains an odd number of crossings. If $s_j$ is the first of these and $s_k$ is the last, then without loss of generality $s_j$ and $s_k$ both go from the lower frontier of $\bar{U}$ to the upper frontier. Since $s_{j - 1}$ is not in the same connected component of $\bar{g}(\bar{U})$ as $s_j$, and $s_{j - 1}$ is oriented downward while $s_j$ is oriented upward, by Lemma \ref{SameComponent}, for every $t > 0$, $\bar{g}(c_t)$ has a crossing of $\bar{U}$ between $s_{j - 1}$ and $s_j$ that lies in $\bar{V}_i$. Similarly, since $s_k$ and $s_{k + 1}$ are not in the same connected component of $\bar{g}(\bar{U}) \cap \bar{U}$, and $s_k$ is oriented upward while $s_{k + 1}$ is oriented downward, by Lemma \ref{SameComponent}, for every $t < 0$, $\bar{g}(c_t)$ has a crossing of $\bar{U}$ between $s_{k}$ and $s_{k + 1}$ that lies in $\bar{V}_i$. Thus, every essential curve in $\bar{U}$ has a $\bar{g}$-image that crosses $\bar{U}$ in $\bar{V}_i$.
\end{proof}

For every $n$, $\bar{f}^n(\bar{V}_i)$ is a component of $\bar{U} \cap \bar{g}(\bar{U})$ containing a crossing of $\bar{g}(\overline{x\text{-axis}})$, i.e., is some $\bar{V}_j$. Since there are only finitely many of these, there exists an $n > 0$ such that $\bar{f}^n(\bar{V}_i) = \bar{V}_i$.

Letting $V$ be a connected component of $U \cap g(U)$ projecting to $\bar{V}_i$, this implies that there is some $m$ such that $f^n(V) = h^m(V)$. Therefore, $\hat{f}$ must have translation number $m/n$ on both the upper boundary line of $\hat{U}$ and on the upper boundary line of $\widehat{g(U)}$. On the other hand, if $\hat{f}$ has translation number $m/n$ on the upper boundary line of $\hat{U}$ then $\widehat{gfg^{-1}} = \widehat{h^{-1}f}$ has the same translation number on the upper boundary line of $\widehat{g(U)},$ so $\hat{f}$ must have translation number $m/n + 1$ on this line, a contradiction.


\begin{thebibliography}{9}
\bibitem{A&S} L. Ahlfors and L. Sario, \emph{Riemann Surfaces}, Vol. 960, Princeton University Press, 1960.

\bibitem{AFW} M. Aschenbrenner, S. Friedl, and H. Wilton, \emph{3-manifold groups}, \newblock{\tt http://arxiv.org/abs/1205.0202}.


\bibitem{Franks2} J. Franks, \emph{Geodesics on $S^2$ and periodic points of annulus homeomorphisms}, Invent. Math. \textbf{108} (1992), No. 1, 403-418.

\bibitem{Franks3} \line(1,0){30}, \emph{Realizing rotation vectors for torus homeomorphisms}, Trans. Amer. Math. Soc. \textbf{311} (1989), No. 1, 107-115.

\bibitem{Franks} \line(1,0){30}, \emph{Rotation vectors and fixed points of area preserving surface diffeomorphisms}, Trans. Amer. Math. Soc. \textbf{348} (1996), No. 7, 2637-2662.

\bibitem{Ghys}
\'{E}. Ghys, \emph{Groups acting on the circle}, Enseign. Math. \textbf{47} (2001), No. 3/4, 329-408.


\bibitem{Homma} T. Homma, \emph{An extension of the Jordan curve theorem}, Yokohama Math. J., \textbf{1} (1953), 125-129.

\bibitem{Mather} J. N. Mather, \emph{Topological proofs of some purely topological consequences of Carath\'{e}odory's theory of prime ends}, Selected studies: physics-astrophysics, mathematics, history of science: a volume dedicated to the memory of Albert Einstein (Th. M. Rassias and G. M. Rassias, eds.), North-Holland Pub. Co., Amsterdam, 1982, pp. 225-255.


\bibitem{Parkhe} K. Parkhe, \emph{Actions of the Heisenberg group on surfaces}, PhD Diss., Northwestern University (2013).

\bibitem{Poincare} H. Poincar\'{e}, \emph{Sur les courbes d\'{e}finies par les \'{e}quations diff\'{e}rentielles}, J. Math. Pures Appl. s\'{e}rie 4 \textbf{1} (1885), 167-244.

\bibitem{Pollicott} M. Pollicott, \emph{Rotation sets for homeomorphisms and homology}, Trans. Amer. Math. Soc. \textbf{331} (1992), No. 2, 881-894.

\bibitem{Rhodes} F. Rhodes, \emph{Asymptotic cycles for continuous curves on geodesic spaces}, J. London Math. Soc. (2), No. 2 (1973), 247-255.

\bibitem{Schwartzman} S. Schwartzman, \emph{Asymptotic Cycles}, Annals of Math. \textbf{66} (1957), No. 2, 270-284.

\bibitem{Wang} J. Wang, \emph{A generalization of the line translation theorem}, \newblock{\tt http://arxiv.org/abs/1104.5185}.
\end{thebibliography}
\end{document}